\newtheorem{theorem}{Theorem}[section]
\newtheorem{lemma}[theorem]{Lemma}
\theoremstyle{definition}
\newtheorem{definition}[theorem]{Definition}
\newtheorem{corollary}[theorem]{Corollary}
\newtheorem{proposition}[theorem]{Proposition}
\newtheorem{example}[theorem]{Example}
\numberwithin{equation}{section}
\theoremstyle{remark}
\numberwithin{equation}{section}
\newcommand{\field}{{\bf k}} 
\newcommand{\betti}{\tilde{\beta}}
\DeclareMathOperator{\lk}{lk}
\DeclareMathOperator{\rk}{rk}
\title[]{A classification of the face numbers of Buchsbaum simplicial posets}
\author{Jonathan Browder}
\address{Aalto University \\ Department of Mathematics \\ Helsinki, Finland}
\email{jonathan.browder@aalto.fi}
\author{Steven Klee}
\address{Seattle University\\ Department of Mathematics \\ 901 12th Avenue \\ Seattle, WA 98122}
\email{klees@seattleu.edu}
\urladdr{http://http://fac-staff.seattleu.edu/klees/web/}
\date{\today}
\begin{document}

\begin{abstract} The family of Buchsbaum simplicial posets generalizes the family of simplicial cell manifolds.  The $h'$-vector of a simplicial complex or simplicial poset encodes the combinatorial and topological data of its face numbers and the reduced Betti numbers of its geometric realization. Novik and Swartz  showed that the $h'$-vector of a Buchsbaum simplicial poset satisfies certain simple inequalities; in this paper we show that these necessary conditions are in fact sufficient to characterize the $h'$-vectors of Buchsbaum simplicial posets with prescribed Betti numbers.

\end{abstract}

\maketitle



\section{Introduction}

A (finite) \textit{simplicial poset} is a poset with a unique minimal element $\hat{0}$ in which each interval $[\hat{0},\sigma]$ is isomorphic to a Boolean lattice. Of particular interest are the families of Cohen-Macaulay and Buchsbaum simplicial posets.  One may view the former as a generalization of the family of simplicial cell spheres and balls and the latter as a generalization of the family of simplicial cell manifolds (with or without boundary).

One natural invariant of a $(d-1)$-dimensional simplicial poset $P$ is its $f$-vector, $f(P) = (f_{-1}(P), f_0(P), \ldots, f_{d-1}(P))$,  where $f_i(P)$ counts the number of $i$-dimensional faces of $P$.  In recent decades a large amount of effort has gone into characterizing the possible $f$-vectors of various classes of simplicial posets. Stanley \cite{Stanley-CM-posets} provided such a characterization for Cohen-Macaulay simplicial posets; Masuda \cite{MR2139917} characterized the face numbers of simplicial posets that are Gorenstein*. Murai \cite{Murai-simplicial-cell} gave a characterization of the $f$-vectors of simplicial cell posets whose geometric realizations are products of spheres  and later a characterization for simplicial posets whose geometric realizations are balls \cite{MR3003273}. Building on work of Novik and Swartz \cite{Novik-Swartz}, we here characterize the face numbers of all Buchsbaum simplicial posets with prescribed Betti numbers.

When studying the $f$-vector it is often it is more convenient to consider instead the $h$-vector of $P$, $h(P) = (h_0(P), h_1(P) \ldots h_d(P))$,  a transformation of $f(P)$ that contains equivalent information. In the case that $P$ is Cohen-Macaulay, the $h$-numbers of $P$ count the graded dimensions of a certain module associated to $P$; in the more general case that $P$ is Buchsbaum these dimensions are given by the $h'$-numbers of $P$, which are linear combinations of the $h$-numbers and Betti numbers $\widetilde{\beta}_i(P) := \dim_{\field}\widetilde{H}_i(P;\field)$ of $P$.

Stanley \cite[Theorem 3.10]{Stanley-CM-posets} showed that a vector $\mathbf{h} = (h_0, h_1, \ldots, h_d) \in \mathbb{Z}^{d+1}$ is the $h$-vector of a Cohen-Macaulay simplicial poset of rank $d$ if and only if $h_0 = 1$ and $h_j \geq 0$ for all $j$.  More generally, Novik and Swartz  \cite[Theorem 6.4]{Novik-Swartz} showed that if $P$ is a Buchsbaum simplicial poset of rank $d$, then $h'_j(P) \geq {d \choose j} \widetilde{\beta}_{j-1}(\Delta(P))$ for all $0 \leq j \leq d$.   This led them \cite[Question 7.4]{Novik-Swartz} to ask whether or not these bounds are sufficient to classify the $h$-numbers of Buchsbaum simplicial posets with prescribed Betti numbers. 

Novik and Swartz reduced this to a problem of constructing a family of Buchsbaum simplicial posets $X(k,d)$, for all $d$ and all $0 \leq k \leq d-1$, satisfying
\begin{eqnarray*}
\widetilde{\beta}_i(X(k,d)) = \begin{cases} 1, & \text{ if } i = k \\ 0, & \text{ if } i\neq k,\end{cases}
&\text{ and }& 
h'_j(X(k,d)) = \begin{cases} {d \choose j}, & \text{ if } j = 0, k+1 \\ 0, & \text{ otherwise}. \end{cases}
\end{eqnarray*}
In this paper we give an explicit construction of $X(k,d)$ for all values of $k$ and $d$.  Our construction is motivated by the study of manifold crystallizations, which encode the facet-ridge incidences of a pseudomanifold triangulation through edge-labeled multigraphs.  The simplicial posets we construct are not manifolds (or even pseudomanifolds), but they can be encoded in the same way.

The remainder of the paper is structured as follows.  In Section \ref{section:background} we provide a brief background on simplicial complexes and simplicial posets, along with a summary of Novik and Swartz's work towards providing a combinatorial classification of Buchsbaum simplicial posets.  In Section \ref{section:sect3}, we prove a series of lemmas regarding the necessary properties of the posets to be constructed.  In Section \ref{section:sect4}, we give a brief discussion of the theory of manifold crystallizations, translate it to the theory of simplicial posets, and apply it to give constructions for posets $X(k,d)$.

\section{Background and definitions} \label{section:background}

\subsection{Simplicial complexes and simplicial posets}

A \textit{simplicial complex}, $\Delta$, on vertex set $V = V(\Delta)$ is a collection of subsets $\tau \subseteq V$, called \textit{faces}, with the property that if $\tau \in \Delta$ and $\sigma \subseteq \tau$, then $\sigma\in \Delta$.  The \textit{dimension} of a face $\tau \in \Delta$ is $\dim(\tau):= |\tau|-1$, and the dimension of $\Delta$ is $\dim(\Delta):= \max\{\dim(\tau): \tau \in \Delta\}$.  The \emph{link} of a face $\tau \in \Delta$ is 
\[
\lk_{\Delta}(\tau) = \{ \gamma \in \Delta : \gamma \cap \tau = \emptyset, \gamma \cup \tau \in \Delta \}.
\]
The \textit{face poset} of a simplicial complex is the poset of faces of $\Delta$ ordered by inclusion.   The empty set is clearly the unique minimal element of the face poset of any (nonempty) simplicial complex.  Moreover, the face poset is naturally graded by $\rk(\tau) = |\tau|$; and since any face in a simplicial complex is determined by its vertices, any interval $[\sigma,\tau]$ in the face poset is a boolean lattice of rank $\rk(\tau)-\rk(\sigma)$. 

More generally, a \textit{simplicial poset}  is a poset $P$ with a unique minimal element $\hat{0}$ such that any interval $[\hat{0},\tau]$ in $P$ is a Boolean lattice.  The face poset of a simplicial complex is a simplicial poset, and hence simplicial posets serve as natural generalizations of simplicial complexes.  Many of the commonly-studied combinatorial and topological properties of simplicial complexes translate directly into corresponding properties of simplicial posets.  We refer to Stanley's book \cite{Stanley-green-book}  for further background information. 

As with simplicial complexes, a simplicial poset is naturally graded by declaring that $\rk(\tau)=r$ if $[\hat{0},\tau]$ is a Boolean lattice of rank $r$. For any simplicial poset $P$, there is a regular CW-complex $|P|$, called the \textit{geometric realization} of $P$, whose face poset is $P$.  The closed cells of $P$ are geometric simplices with every pair of cells intersecting along a (possibly empty) subcomplex of their boundaries (as opposed to a single face, as is the case for simplicial complexes); such a CW-complex is called a \emph{simplicial cell complex}.  Hereafter we will use the terminology of simplicial poset $P$ and that of its associated simplicial cell complex $|P|$ interchangeably, so a rank $r$ element of $P$ is an $(r-1)$-dimensional face, and the dimension of $P$ is $\dim(P) = \dim(|P|) = \rk(P)-1$.  In particular, following the poset terminology, we refer to the rank-one elements of a simplicial poset as \textit{atoms}; the atoms of $P$ correspond to the vertices of $|P|$.   We say $P$ is \emph{pure} if all of its maximal faces have the same dimension.

The \textit{order complex} of a simplicial poset $P$ is the simplicial complex $\Delta(P)$ whose vertices are the elements of $P - \hat{0}$, and whose faces are chains of the form $\tau_0 < \tau_1 < \cdots <\tau_r$.  Topologically, $|\Delta(P)|$ is the barycentric subdivision of $|P|$.  

The \textit{link} of a face $\sigma$ in a simplicial poset $P$ is $$\lk_{P}(\sigma) = \{ \tau \in P\; : \; \tau \geq \sigma\}.$$  It is easy to see that $\lk_{P}(\sigma)$ is a simplicial poset whose unique minimal element is $\sigma$. If $P$ is the face poset of a simplicial complex, $\lk_P(\sigma)$ is the face poset of the link of $\sigma$ in $\Delta$. Note however that in general the order complex of the link of $\sigma$ in $P$ is \emph{not} equal to the link of $\sigma$ in the order complex of $P$, i.e., $\Delta(\lk_P(\sigma)) \neq \lk_{\Delta(P)}(\sigma)$.

The most natural combinatorial invariant of a finite $(d-1)$-dimensional simplicial poset is its \textit{$f$-vector}, $f(P):=(f_{-1}(P), f_0(P), \ldots, f_{d-1}(P))$, where the \textit{$f$-numbers} $f_i(P)$ count the number of $i$-dimensional faces in $P$.  Often it is more natural to study a certain integer transformation of the $f$-vector called the \textit{$h$-vector}, $h(P) :=(h_0(P), h_1(P), \ldots, h_d(P))$ whose entries, the \textit{$h$-numbers} of $P$, are defined by the formula 
\begin{equation} \label{h-nums-formula}
h_j(P) = \sum_{i=0}^j (-1)^{j-i} {d-i \choose d-j} f_{i-1}(P).
\end{equation}
For any $(d-1)$-dimensional simplicial poset $P$, $h_0(P) = 1$  and $h_d(P) = (-1)^{d-1}\widetilde{\chi}(P)$, where $\widetilde{\chi}(P)$ denotes the reduced Euler characteristic of $P$.  Since the Euler characteristic of $P$ is inherently related to both the combinatorial and topological structure of $P$, we will also be interested in studying the \textit{(reduced) Betti numbers} of $P$ (over a field $\field$), which are defined as $\widetilde{\beta}_i(P) = \widetilde{\beta}_i(P;\field):= \dim_{\field}\widetilde{H}_i(P;\field)$.  

The primary reason for studying $h$-numbers instead of $f$-numbers is that they arise naturally when studying the \textit{face ring} of a simplicial poset.  We will not define the face ring here since the properties we are interested in studying can be defined equivalently in terms of topological information.  We refer to Stanley's book \cite{Stanley-green-book} for further information on the algebraic properties of face rings. 

We will be interested in studying two families of simplicial posets known as \textit{Cohen-Macaulay} simplicial posets and \textit{Buchsbaum} simplicial posets.  For simplicial \textit{complexes}, the properties of being Cohen-Macaulay or Buchsbaum is defined as an algebraic condition on the face ring.  Reisner \cite{Reisner} showed that the Cohen-Macaulay property for simplicial complexes is a topological condition.  Reisner's condition was later generalized to Buchsbaum complexes by Schenzel \cite{Schenzel}.  We summarize these results in the following theorem, which we will use as our definition of Cohen-Macaulay and Buchsbaum simplicial complexes. 

\begin{theorem} \label{theorem:CMLinks}
A $(d-1)$-dimensional simplicial complex $\Delta$ is Cohen-Macaulay (over a field $\field$) if and only if $$\widetilde{H}_i(\lk_{\Delta}(\tau);\field) = 0,$$ for all faces $\tau \in \Delta$ (including $\tau = \emptyset$) and all $i < \dim(\lk_{\Delta}(\tau))$.  The complex $\Delta$ is Buchsbaum (over $\field$) if and only if it is pure and the link of each of its vertices is Cohen-Macaulay (over $\field$).  
\end{theorem}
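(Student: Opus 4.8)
The approach is to pass to the Stanley--Reisner (face) ring
$\field[\Delta] = \field[x_v : v \in V(\Delta)]/I_\Delta$, where $I_\Delta$ is generated by the squarefree monomials $\prod_{v \in \sigma} x_v$ corresponding to the minimal non-faces $\sigma$ of $\Delta$. This ring is $\mathbb{Z}^{V(\Delta)}$-graded and has Krull dimension $d$ when $\dim\Delta = d-1$, and by definition $\Delta$ is Cohen--Macaulay (resp.\ Buchsbaum) over $\field$ precisely when $\field[\Delta]$ is a Cohen--Macaulay (resp.\ Buchsbaum) ring with respect to its irrelevant maximal ideal $\mathfrak{m} = (x_v : v \in V(\Delta))$. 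The whole argument then rests on Hochster's computation of the $\mathbb{Z}^{V(\Delta)}$-graded local cohomology of $\field[\Delta]$: in a $\mathbb{Z}^{V(\Delta)}$-degree the value of $H^i_{\mathfrak{m}}(\field[\Delta])$ depends only on the set $\sigma$ of coordinates that are strictly negative; that value is $\widetilde{H}^{\,i-|\sigma|-1}(\lk_\Delta(\sigma);\field)$ when $\sigma\in\Delta$ and all coordinates are $\leq 0$, and it is $0$ otherwise. I would establish this by writing out the stable Koszul (\v{C}ech) complex computing $H^\bullet_{\mathfrak{m}}(\field[\Delta])$, splitting it by multidegree, and identifying each graded strand with a shifted copy of the augmented simplicial cochain complex of a link of $\Delta$; this is the technical core.

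Granting Hochster's formula, the first statement is Reisner's criterion. By the standard characterization of Cohen--Macaulayness through local cohomology, $\field[\Delta]$ is Cohen--Macaulay if and only if $H^i_{\mathfrak{m}}(\field[\Delta]) = 0$ for all $i < d$, which by Hochster's formula says that $\widetilde{H}^{\,i-|\sigma|-1}(\lk_\Delta(\sigma);\field) = 0$ for every $\sigma\in\Delta$ (including $\sigma=\emptyset$) and every $i < d$. Since $\widetilde{H}^{\,j}$ and $\widetilde{H}_j$ have equal $\field$-dimension, reindexing by $j = i - |\sigma| - 1$ rewrites this as $\widetilde{H}_j(\lk_\Delta(\sigma);\field) = 0$ for all $\sigma$ and all $j < (d-1) - |\sigma|$. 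Applying this with $\sigma$ a facet shows any such $\Delta$ is pure (a facet of dimension $<d-1$ would force $\widetilde{H}_{-1}(\{\emptyset\};\field)\neq 0$ in the forbidden range), and once $\Delta$ is pure we have $\dim\lk_\Delta(\sigma) = (d-1) - |\sigma|$, so the condition becomes exactly $\widetilde{H}_i(\lk_\Delta(\tau);\field) = 0$ for all $\tau\in\Delta$ and all $i < \dim\lk_\Delta(\tau)$.

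For the second statement I would use Schenzel's homological description of Buchsbaum Stanley--Reisner rings: $\field[\Delta]$ is Buchsbaum if and only if $H^i_{\mathfrak{m}}(\field[\Delta])$ is concentrated in degree $0$ for every $i < d$. By Hochster's formula the degree-$0$ part of $H^i_{\mathfrak{m}}(\field[\Delta])$ is $\widetilde{H}^{\,i-1}(\Delta;\field)$, while its parts supported on a nonempty $\sigma$ carry $\widetilde{H}^{\,i-|\sigma|-1}(\lk_\Delta(\sigma);\field)$, so ``concentrated in degree $0$ for all $i<d$'' is equivalent to $\widetilde{H}_j(\lk_\Delta(\sigma);\field) = 0$ for all nonempty $\sigma\in\Delta$ and all $j < (d-1) - |\sigma|$. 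Exactly as above, applying this with $\sigma$ a facet forces $\Delta$ to be pure, after which the condition reads ``$\lk_\Delta(\sigma)$ is Cohen--Macaulay for every nonempty $\sigma$.'' Finally, since links of faces in a Cohen--Macaulay complex are Cohen--Macaulay and $\lk_\Delta(\sigma) = \lk_{\lk_\Delta(v)}(\sigma\setminus\{v\})$ for any $v\in\sigma$, this is equivalent to requiring only that $\lk_\Delta(v)$ be Cohen--Macaulay for every vertex $v$, which is the stated criterion.

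The one genuinely heavy ingredient here is Hochster's formula for the multigraded local cohomology of $\field[\Delta]$ (together with the local-cohomology characterizations of Cohen--Macaulay and Buchsbaum graded rings); everything downstream is bookkeeping in the indices and the two ``take $\sigma$ a facet'' purity observations. These facts are classical, which is why in this paper we simply cite Reisner \cite{Reisner} and Schenzel \cite{Schenzel} and adopt the resulting topological conditions as our working definition.
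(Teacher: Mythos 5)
The paper itself does not prove this statement at all: Theorem \ref{theorem:CMLinks} is presented as a summary of Reisner's and Schenzel's theorems, which are cited and then adopted as the \emph{definition} of Cohen--Macaulay and Buchsbaum complexes used throughout. So your proposal is not paralleling an argument in the paper but reconstructing the classical proof, and the route you choose is the standard one: Hochster's formula for the multigraded local cohomology of $\field[\Delta]$, the vanishing characterization of depth, and Schenzel's theorem that for Stanley--Reisner rings Buchsbaumness is equivalent to $H^i_{\mathfrak{m}}(\field[\Delta])$ being concentrated in multidegree $\mathbf{0}$ for $i<d$ (note this equivalence is special to Stanley--Reisner rings, as you correctly indicate; in general that condition is weaker than Buchsbaum). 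Your bookkeeping reductions --- links of links are links, $\lk_\Delta(\sigma)=\lk_{\lk_\Delta(v)}(\sigma\setminus\{v\})$ for $v\in\sigma$, and the passage from ``all nonempty faces have Cohen--Macaulay links'' to ``all vertices do'' --- are all correct.

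There is, however, one genuine gap in the Reisner half. Your purity argument works only for the condition you extract from local cohomology, namely $\widetilde{H}_j(\lk_\Delta(\sigma);\field)=0$ for all $j<(d-1)-|\sigma|$: for a facet $\sigma$ with $|\sigma|<d$ this range contains $j=-1$ and $\widetilde{H}_{-1}(\{\emptyset\};\field)\neq 0$ gives the contradiction. But the condition in the statement is the a priori weaker one with $j<\dim\lk_\Delta(\sigma)$, and for a facet $\sigma$ one has $\lk_\Delta(\sigma)=\{\emptyset\}$, $\dim\lk_\Delta(\sigma)=-1$, so that range is empty and the same trick says nothing. As written, your argument therefore proves ``Cohen--Macaulay $\Rightarrow$ stated condition'' and ``stated condition $+$ purity $\Rightarrow$ Cohen--Macaulay,'' but not the ``if'' direction of the theorem as stated. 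To close it you need the classical lemma that the stated link-vanishing condition already forces purity: induct on $\dim\Delta$; taking $\tau=\emptyset$ gives $\widetilde{H}_0(\Delta;\field)=0$, so $\Delta$ is connected, each vertex link inherits the hypothesis and is pure by induction, hence all facets through a fixed vertex have a common dimension, and connectedness of the $1$-skeleton makes that dimension constant over all vertices. With this lemma inserted your two conditions coincide and the equivalence is complete; no such issue arises in the Buchsbaum half, where purity is an explicit hypothesis of the statement.
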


A simplicial poset $P$ is Cohen-Macaulay (respectively Buchsbaum) over $\field$ if and only if its order complex $\Delta(P)$ is Cohen-Macaulay (resp. Buchsbaum) over $\field$.  Thus any simplicial poset whose geometric realization is a $(d-1)$-sphere or ball is Cohen-Macaulay (over any field), and any simplicial poset whose realization is a manifold (with or without boundary) is Buchsbaum (over any field).     We will typically omit the field $\field$ when it is either understood or arbitrary.  The second part of Theorem \ref{theorem:CMLinks} continues to hold for Buchsbaum simplicial posets.  

\begin{lemma} \label{Lemma:CMlinks}Let $P$ be a simplicial poset. Then $P$ is Buchsbaum if and only if $\lk_P({v})$ is Cohen-Macaulay for each atom $v \in P$.

\begin{proof} By Theorem \ref{theorem:CMLinks}, $\Delta(P)$ is Buchsbaum if and only if $\lk_{\Delta(P)}(p)$ is Cohen-Macaulay for each $\hat{0} < p \in P$. On the other hand, it is clear from the definitions that $\lk_{\Delta(P)}(p) = \Delta(P_{<p}) * \Delta(\lk_P(p))$, where $*$ denotes the simplicial join. As $P$ is simplicial, $\Delta(P_{<p})$ is the barycentric subdivision of the boundary of a simplex, and thus is Cohen-Macaulay, and it is well known that $\Gamma * \Omega$ is Cohen-Macaulay if and only if both $\Gamma$ and $\Omega$ are (see e.g. Exercise 35 of \cite{Stanley-green-book}). Hence $P$ is Buchsbaum if and only if $\lk_P(p)$ is Cohen-Macaulay for each $p > \hat{0}$.

On the other hand, for $p \in P$ with $\rk(p) > 1$, there is an atom $v \in P$ with $v < p$. Then $\lk_P(p) = \lk_{\lk(v)}(p)$. Thus, as the Cohen-Macaulay property is preserved under taking links, to show that every link in $P$ is Cohen-Macaulay it suffices to check the links of atoms.

\end{proof}

\end{lemma}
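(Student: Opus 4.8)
The plan is to reduce the claim, via the definition of the Buchsbaum property for simplicial posets together with the link criterion of Theorem~\ref{theorem:CMLinks}, to a statement about links in the order complex $\Delta(P)$, and then to identify each such link as a simplicial join one of whose factors is automatically Cohen--Macaulay.

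First I would recall that $P$ is Buchsbaum exactly when $\Delta(P)$ is, and that the vertices of $\Delta(P)$ are the elements $p > \hat 0$ of $P$; by Theorem~\ref{theorem:CMLinks} this means $P$ is Buchsbaum if and only if $\lk_{\Delta(P)}(p)$ is Cohen--Macaulay for every $p > \hat 0$. The key point is that a chain of $\Delta(P)$ through $p$ splits uniquely into its part strictly below $p$ and its part weakly above $p$: the former runs over the chains of the open interval $P_{<p} = (\hat 0, p)$ and the latter over the chains of $\lk_P(p)$. Hence
\[
  \lk_{\Delta(P)}(p) = \Delta(P_{<p}) * \Delta(\lk_P(p)),
\]
where $*$ denotes the simplicial join; note the join is with $\Delta(P_{<p})$ and not with $\Delta(\lk_P(p))$ alone, which is precisely why $\lk_{\Delta(P)}(p) \neq \Delta(\lk_P(p))$ in general.

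Next, because $P$ is simplicial the interval $[\hat 0, p]$ is Boolean, so $P_{<p}$ is the face poset of the boundary of a simplex and $\Delta(P_{<p})$ is its barycentric subdivision, hence a triangulated sphere and in particular Cohen--Macaulay over every field. Invoking the standard fact that $\Gamma * \Omega$ is Cohen--Macaulay if and only if both $\Gamma$ and $\Omega$ are (Exercise~35 of \cite{Stanley-green-book}), I conclude that $\lk_{\Delta(P)}(p)$ is Cohen--Macaulay if and only if $\Delta(\lk_P(p))$ is, i.e.\ if and only if the simplicial poset $\lk_P(p)$ is Cohen--Macaulay. Combining this with the previous step shows that $P$ is Buchsbaum if and only if $\lk_P(p)$ is Cohen--Macaulay for all $p > \hat 0$.

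Finally I would reduce ``all $p > \hat 0$'' to ``all atoms''. One implication is trivial since atoms are among the $p > \hat 0$. For the converse, if $\rk(p) \geq 2$ pick an atom $v < p$ (which exists because $[\hat 0, p]$ is Boolean of rank at least $2$); then $\lk_P(p) = \lk_{\lk_P(v)}(p)$, and links of Cohen--Macaulay posets are Cohen--Macaulay, so Cohen--Macaulayness of every atom link propagates to every link. The only step that is not pure bookkeeping is the join decomposition of $\lk_{\Delta(P)}(p)$ in the second paragraph — it is easy once set up correctly, but it is exactly the point where the naive guess $\lk_{\Delta(P)}(p) = \Delta(\lk_P(p))$ would derail the argument, so I would state and check it carefully.
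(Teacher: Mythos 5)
Your proposal is correct and follows essentially the same route as the paper's proof: reduce via $\Delta(P)$ and Theorem \ref{theorem:CMLinks}, use the join decomposition $\lk_{\Delta(P)}(p) = \Delta(P_{<p}) * \Delta(\lk_P(p))$ with $\Delta(P_{<p})$ Cohen--Macaulay since $[\hat 0,p]$ is Boolean, and then pass from arbitrary $p > \hat 0$ to atoms using $\lk_P(p) = \lk_{\lk_P(v)}(p)$ and preservation of Cohen--Macaulayness under links. No gaps to report.
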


The $h$-numbers of a Cohen-Macaulay simplicial poset are nonnegative \cite{Stanley-green-book} because they arise naturally as the dimensions of certain graded vector spaces associated to the face ring.  If $P$ is a Buchsbaum simplicial poset, the dimensions of the analogous vector spaces are given by the \textit{$h'$-numbers} of $P$ \cite{Novik-Swartz}, which are defined as

\begin{equation} \label{hprime-numbers}
h'_j(P) = h_j(P) + {d \choose j} \sum_{i=0}^{j-1} (-1)^{j-i-1}\widetilde{\beta}_{i-1}(P),
\end{equation}
for all $0 \leq j \leq d$. The list $(h_0'(P), h_1'(P), \ldots, h_d'(P))$ is the \emph{$h'$-vector} of $P$. Note that as $h_d(P) = (-1)^{d-1}\widetilde{\chi}(P)$, it follows that $h_d'(P) = \betti_{d-1}(P)$.  Note also that a Cohen-Macaulay simplicial complex (and thus a Cohen-Macaulay simplicial poset) may have non-vanishing reduced homology only in top degree, in which case the $h$- and $h'$-vectors coincide.

Novik and Swartz gave necessary conditions on the $h'$-vectors of Buchsbaum posets.

\begin{theorem} \cite[Theorem 6.4]{Novik-Swartz}\label{Theorem:NS} Let $P$ a Buchsbaum simplicial poset of rank $d$. Then $h'_j(P) \geq \binom{d}{j}\betti_{j-1}(P)$ for $j = 1, 2, \ldots, d-1$.

\end{theorem}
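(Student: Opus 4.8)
The plan is to translate the statement into commutative algebra via the face ring. First I would reduce to the case that $\field$ is infinite: extending scalars to a purely transcendental extension leaves both $h'_j(P)$ and $\betti_{j-1}(P)$ unchanged, since simplicial (co)homology commutes with flat base change. Let $R = \field[P]$ be the face ring of $P$; it is a standard graded $\field$-algebra of Krull dimension $d$, and it is a Buchsbaum ring precisely because $P$ is a Buchsbaum poset. Choose a linear system of parameters $\Theta = \theta_1, \dots, \theta_d$ for $R$, write $F = \bigoplus_{i=1}^d \field\,\theta_i$ with $F$ placed in degree $1$, and let $\mathfrak{m} \supseteq (\Theta)$ be the irrelevant maximal ideal.

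The first step is to identify $\dim_\field(R/\Theta R)$ with the $h'$-vector. By the graded form of Schenzel's Hilbert-function formula for Buchsbaum modules \cite{Schenzel}, together with the Hochster/Reisner-type computation that for $i < d$ the local cohomology $H^i_{\mathfrak m}(R)$ is concentrated in degree $0$ with $\dim_\field H^i_{\mathfrak m}(R)_0 = \betti_{i-1}(P)$, one gets $\dim_\field(R/\Theta R)_j = h'_j(P)$ for all $j$; indeed formula \eqref{hprime-numbers} is exactly Schenzel's formula rewritten in terms of $h$-numbers and Betti numbers. So it suffices to prove
\[
\dim_\field(R/\Theta R)_j \;\ge\; \binom{d}{j} \dim_\field H^j_{\mathfrak m}(R)_0 \qquad (1 \le j \le d-1).
\]

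For the second step I would use a spectral sequence. Form the double complex $K_\bullet(\Theta; R) \otimes_R \check{C}^\bullet$, where $\check{C}^\bullet$ is a \v{C}ech complex computing $H^\bullet_{\mathfrak m}(R)$. Since $(\Theta)$ is $\mathfrak m$-primary, each Koszul homology module $H_p(\Theta; R)$ has finite length, so in one of the two associated spectral sequences all columns but one are killed after localizing; hence the total complex has homology $H_\bullet(\Theta; R)$, in particular $H_0(\Theta; R) = R/\Theta R$. The other spectral sequence reads $E_2^{p,q} = H_p(\Theta; H^q_{\mathfrak m}(R)) \Rightarrow H_{p-q}(\Theta; R)$; its differentials are homogeneous of degree $0$, and its $E_\infty$-terms with $p = q$ form the associated graded of a filtration of $R/\Theta R$. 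For $q < d$ the module $H^q_{\mathfrak m}(R)$ is annihilated by $\mathfrak m \supseteq (\Theta)$, so the Koszul differentials on it vanish and $E_2^{p,q} = \bigwedge^p F \otimes_\field H^q_{\mathfrak m}(R)_0$, which lies entirely in degree $p$; in particular $(E_2^{j,j})_j \cong H^j_{\mathfrak m}(R)_0^{\oplus \binom{d}{j}}$ for $1 \le j \le d-1$. A degree count then shows that every differential into or out of $(E_r^{j,j})_j$ vanishes: such a $d_r$ connects degree $j$ to the degree-$j$ component of a subquotient of $\bigwedge^{j\mp r}F \otimes H^{j\pm(r-1)}_{\mathfrak m}(R)$, which is either concentrated in degree $j\mp r \ne j$ (when the cohomological index is $< d$) or, in the single remaining case of an outgoing differential whose target involves $H^d_{\mathfrak m}(R)$, supported in degrees $< j$ because $H^d_{\mathfrak m}(R)$ vanishes in positive degrees (the $a$-invariant bound, again from the Hochster-type formula). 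Hence $(E_\infty^{j,j})_j = (E_2^{j,j})_j$, so
\[
\dim_\field(R/\Theta R)_j \;\ge\; \dim_\field(E_\infty^{j,j})_j \;=\; \binom{d}{j}\,\betti_{j-1}(P),
\]
and with the first step this yields $h'_j(P) \ge \binom{d}{j}\betti_{j-1}(P)$ for $1 \le j \le d-1$.

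The delicate point is the bookkeeping in the second step — verifying that no spectral-sequence differential disturbs the degree-$j$ part of $E^{j,j}$ — and this is exactly where the Buchsbaum hypothesis enters: it forces $H^i_{\mathfrak m}(R)$ to be concentrated in degree $0$ for all $i < d$, while the complementary vanishing $H^d_{\mathfrak m}(R)_{>0} = 0$ handles the last case. If one would rather not quote Schenzel's formula, the same spectral sequence gives $\dim_\field(R/\Theta R)_j = \binom{d}{j}\betti_{j-1}(P) + \dim_\field(E_\infty^{d,d})_j$, and the remaining term can be computed by graded local duality; but that route essentially reproves Schenzel's theorem.
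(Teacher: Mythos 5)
A preliminary point: the paper does not prove Theorem \ref{Theorem:NS} at all --- it is quoted verbatim from Novik and Swartz \cite{Novik-Swartz}, so the only meaningful comparison is with their original argument. Your outline is essentially a reconstruction of that argument rather than a new proof: they too work with the face ring $R=\field[P]$, a degree-one system of parameters $\Theta$, the identification $\dim_\field(R/\Theta R)_j = h'_j(P)$ (their Theorem 6.3, i.e.\ Schenzel's formula adapted to simplicial posets), and a lower bound on the degree-$j$ part of $R/\Theta R$ coming from $H^j_{\mathfrak{m}}(R)$. Where you genuinely differ is in how that lower bound is produced: they prove a general socle theorem for graded Buchsbaum modules (their Theorem 2.2) by module-theoretic arguments in the St\"uckrad--Vogel style, while you extract it from the degeneration, in internal degree $j$, of the Koszul--\v{C}ech spectral sequence $E_2^{p,q}=H_p(\Theta;H^q_{\mathfrak{m}}(R))$ converging to $H_{p-q}(\Theta;R)$. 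That spectral-sequence step is sound: for $q<d$ the Buchsbaum hypothesis gives $\mathfrak{m}H^q_{\mathfrak{m}}(R)=0$ and (via the simplicial-poset analogue of Hochster's formula) concentration of $H^q_{\mathfrak{m}}(R)$ in internal degree $0$, so $E_2^{p,q}$ sits in degree $p$, and a degree count together with $H^d_{\mathfrak{m}}(R)_{>0}=0$ shows nothing can enter or leave $(E_r^{j,j})_j$; the $E_\infty$-filtration of $(R/\Theta R)_j$ then yields $h'_j(P)\geq\binom{d}{j}\betti_{j-1}(P)$. What your route buys is a transparent, largely formal derivation of the inequality once the structural facts about $H^\bullet_{\mathfrak{m}}(R)$ are granted; what it does not buy is independence from the source, since those structural facts are most of the content of Sections 2 and 6 of \cite{Novik-Swartz}.

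Two caveats you should fix. First, the face ring of a simplicial poset is \emph{not} standard graded (it has generators $x_\sigma$ in every degree $\rk(\sigma)$); what you actually need, and what is true by Stanley's work, is that over an infinite field there is a system of parameters of degree-one elements and that the graded machinery (Schenzel's Hilbert-function formula, the local cohomology computation giving $\dim_\field H^i_{\mathfrak{m}}(R)_0=\betti_{i-1}(P)$ for $i<d$, Buchsbaumness of $R$) holds in this non-standard-graded setting --- these are precisely the poset-specific results of \cite{Novik-Swartz}, not off-the-shelf Reisner--Schenzel statements for complexes. Second, your bookkeeping of incoming versus outgoing differentials is loose: with the filtration by Koszul degree, the differentials out of $(j,j)$ land in subquotients of $\bigwedge^{j-r}F\otimes H^{j-r+1}_{\mathfrak{m}}(R)$, so the case involving $H^d_{\mathfrak{m}}(R)$ never actually occurs (the would-be source $E_r^{j+r,j+r-1}$ with $j+r-1=d$ has Koszul degree $d+1$ and vanishes); the argument is robust under either sign convention, but as written the case analysis does not match the differential directions.
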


In the Cohen-Macaulay case this reduces to the condition that the $h$-numbers are non-negative. In fact, Stanley showed that this completely characterizes the possible $h$-vectors of Cohen-Macaulay simplicial posets.

\begin{theorem}\cite[Theorem 3.10]{Stanley-CM-posets}
A vector $\mathbf{h} = (h_0, h_1, \ldots, h_d) \in \mathbb{Z}^{d+1}$ is the $h$-vector of a Cohen-Macaulay simplicial poset of rank $d$ if and only if $h_0 = 1$ and $h_j \geq 0$ for all $j$.  
\end{theorem}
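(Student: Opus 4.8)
The theorem has an easy direction and a substantive one. For the easy direction, note that every $(d-1)$-dimensional simplicial poset has $h_0 = 1$ (take $j = 0$ in (\ref{h-nums-formula})), and that the $h$-numbers of a Cohen--Macaulay simplicial poset are nonnegative because they are the dimensions of certain graded vector spaces associated with the face ring of $P$ \cite{Stanley-green-book}. So the plan is to concentrate on realizing an arbitrary vector $(1, h_1, \ldots, h_d)$ with $h_j \ge 0$ as the $h$-vector of a Cohen--Macaulay simplicial poset of rank $d$.

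The idea is to build such a poset starting from a single simplex and gluing on more simplices. Begin with $F_0 = \Delta^{d-1}$, and let $\Phi_1, \ldots, \Phi_d$ be its facets. For each $j$ with $1 \le j \le d$, perform $h_j$ steps of the following type: take a fresh $(d-1)$-simplex $F$ and attach it to the poset built so far by identifying a union of $j$ of the facets of $F$ with the subcomplex $\Phi_1 \cup \cdots \cup \Phi_j$ of $F_0$, matching vertices in a fixed way. Since these are identified as subcomplexes (order ideals) of each simplex, each step produces a simplicial poset of rank $d$; and since every glued simplex remains a facet, the final poset $P$ is pure of rank $d$.

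Next I would compute $h(P)$. Writing $L_j$ for the ``union of $j$ facets of $\Delta^{d-1}$'' complex $\{S \subseteq [d] : [j] \not\subseteq S\}$, a step of type $j$ adds $f_{i-1}(\Delta^{d-1}) - f_{i-1}(L_j) = \binom{d}{i} - \bigl(\binom{d}{i} - \binom{d-j}{i-j}\bigr) = \binom{d-j}{i-j}$ new faces of dimension $i-1$, for each $i$. Feeding this into (\ref{h-nums-formula}) and simplifying (using $\binom{N}{l}\binom{N-l}{n} = \binom{N}{n}\binom{N-n}{l}$ and $\sum_l (-1)^l \binom{M}{l} = [M = 0]$) shows that a type-$j$ step increases $h_k$ by exactly $[k = j]$ and leaves $h_0 = 1$. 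Since $h(F_0) = (1, 0, \ldots, 0)$, after all the steps we reach $h(P) = (1, h_1, \ldots, h_d)$.

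The heart of the matter is that $P$ is Cohen--Macaulay, and here I would invoke a standard gluing lemma: if $\Delta_1$ and $\Delta_2$ are Cohen--Macaulay of dimension $n$ and $\Delta_1 \cap \Delta_2$ is Cohen--Macaulay of dimension $n-1$, then $\Delta_1 \cup \Delta_2$ is Cohen--Macaulay of dimension $n$. (This is proved by induction on $n$: a Mayer--Vietoris sequence gives $\widetilde{H}_i(\Delta_1 \cup \Delta_2) = 0$ for $i < n$, while for a nonempty face $\tau$ the link $\lk_{\Delta_1 \cup \Delta_2}(\tau)$ is either a link in one $\Delta_i$ or again a union of two Cohen--Macaulay complexes along a Cohen--Macaulay complex of one lower dimension, hence Cohen--Macaulay by induction; it transfers to simplicial posets via order complexes, since $\Delta(A) \cup \Delta(B) = \Delta(A \cup B)$ with intersection $\Delta(A \cap B)$.) Then I would induct on the number of glued simplices: $F_0$ is a simplex, hence Cohen--Macaulay, and at each step $P = Q \cup F$ with $Q$ Cohen--Macaulay of dimension $d-1$ by the inductive hypothesis, $F$ a simplex, and $Q \cap F = L_j$ a union of $j$ facets of a simplex---which is shellable, hence Cohen--Macaulay, of dimension $d-2$. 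The lemma then gives that $P$ is Cohen--Macaulay. (The cases $d = 1$, and $h_1 = \cdots = h_d = 0$, are trivial.) I expect the main obstacle to be the bookkeeping that makes this last argument airtight: one must check that at each gluing step the intersection $Q \cap F$ really is exactly $L_j$ (the successively attached simplices must not accidentally share any new, un-identified faces) and that $L_j$ is Cohen--Macaulay of dimension exactly $d-2$, so that the gluing lemma applies cleanly at every step.
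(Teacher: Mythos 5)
The paper does not actually prove this statement: it is quoted as background, with the proof deferred to Stanley \cite[Theorem 3.10]{Stanley-CM-posets}, so there is no internal argument to compare yours against. Judged on its own, your proposal is correct and is essentially a reconstruction of Stanley's original sufficiency argument. The necessity direction you treat exactly as the paper does, citing the face-ring interpretation of the $h$-numbers of a Cohen--Macaulay poset as a black box. For sufficiency, your construction (attach, for each $j$, $h_j$ fresh $(d-1)$-simplices to a fixed base simplex $F_0$ along the subcomplex $\Phi_1\cup\cdots\cup\Phi_j$) is the standard one; the computation that a type-$j$ attachment adds the faces $\binom{d-j}{i-j}$ in each dimension and hence increases exactly $h_j$ by $1$ checks out, and your union lemma (two Cohen--Macaulay complexes of dimension $n$ meeting in a Cohen--Macaulay complex of dimension $n-1$ have Cohen--Macaulay union), proved via Reisner's criterion and Mayer--Vietoris and transferred to simplicial posets through order complexes of order ideals, is sound --- this is the same mechanism the present paper relies on implicitly when it quotes Lemma 7.8 of \cite{Novik-Swartz} for gluing along a facet. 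The bookkeeping you flag is genuinely unproblematic: each new simplex is glued only along $L_j\subseteq\partial F$ by an isomorphism onto a subcomplex of $F_0$, so its closed star meets the previously built poset in exactly $L_j$, which is an initial segment of a shelling of $\partial F_0$ and hence Cohen--Macaulay of dimension exactly $d-2$; lower intervals stay Boolean because the identification is injective, so each step really produces a simplicial poset. The only residual caveats are the degenerate case $d=1$ (which you note and which is trivial) and the fact that, like the paper, you do not reprove the nonnegativity of $h$-numbers of Cohen--Macaulay posets but import it from the face-ring theory.
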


Thus it is natural to ask \cite[Question 7.4]{Novik-Swartz} if the condition of Theorem \ref{Theorem:NS} similarly characterizes the $h'$-vectors of Buchsbaum simplicial posets. The main result of this paper is to answer this question in the affirmative.

\begin{theorem} \label{Theorem:Goal} Let $\betti_0, \betti_1,  \ldots, \betti_{d-1}, h'_0, h'_1, \ldots, h'_{d}$ be non-negative integers. Then there is a Buchsbaum simplicial poset $P$ of rank $d$ with $h'_i(P) = h_i$ and $\betti_i(P) = \betti_i$ if and only if  $h'_0 = 1$,  $h_d' = \betti_{d-1}$ and for $j=1, 2, \ldots, d-1$, 
$h'_j \geq \binom{d}{j} \betti_{j-1}$.
\end{theorem}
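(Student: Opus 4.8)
The plan is to split the argument into the easy direction (the inequalities are necessary) and the hard direction (they are sufficient). Necessity is essentially immediate: Theorem~\ref{Theorem:NS} gives $h'_j(P) \geq \binom{d}{j}\betti_{j-1}(P)$ for $1 \leq j \leq d-1$, the identity $h_0(P)=1$ together with \eqref{hprime-numbers} gives $h'_0(P)=1$, and the remark following \eqref{hprime-numbers} records $h'_d(P) = \betti_{d-1}(P)$. So the content is the reverse implication: given nonnegative integers $\betti_0,\dots,\betti_{d-1}$ and $h'_1,\dots,h'_{d-1}$ with $h'_j \geq \binom{d}{j}\betti_{j-1}$ (and with $h'_0 = 1$, $h'_d = \betti_{d-1}$ forced), construct a Buchsbaum simplicial poset of rank $d$ realizing these data.

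The strategy, following the reduction recalled in the introduction, is to build the target poset as a disjoint-union-and-connected-sum type combination of simpler pieces. First I would dispose of the ``Cohen--Macaulay part'': by Stanley's theorem, any vector $(1, a_1, \dots, a_d)$ of nonnegative integers is the $h$-vector of some Cohen--Macaulay (hence Buchsbaum) simplicial poset $Q$ with vanishing reduced homology below the top degree, and for such $Q$ the $h'$-vector equals the $h$-vector. Then I would handle each Betti number separately: for each $k$ with $0 \leq k \leq d-1$ I need a Buchsbaum poset $X(k,d)$ with $\betti_i(X(k,d)) = \delta_{i,k}$ and $h'_j(X(k,d)) = \binom{d}{j}$ for $j \in \{0, k+1\}$ and $0$ otherwise — exactly the family whose existence Novik and Swartz showed would settle the question. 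Finally I would combine these building blocks: taking an appropriate ``connected sum'' (along a common facet, or the simplicial-poset analogue thereof) of $\betti_k$ copies of $X(k,d)$ for each $k$, together with one copy of a Cohen--Macaulay $Q$ chosen to absorb the leftover slack $h'_j - \binom{d}{j}\betti_{j-1} \geq 0$, and checking that connected sum adds reduced Betti numbers and adds $h'$-vectors (after correcting for the shared facet, which contributes a fixed amount), yields a poset with the prescribed invariants. The Buchsbaum property is preserved under these operations since, by Lemma~\ref{Lemma:CMlinks}, it suffices to check that vertex links remain Cohen--Macaulay, and connected sum along a facet does not change vertex links away from the identified region while links in that region remain balls.

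The main obstacle — and the heart of the paper — is the explicit construction of $X(k,d)$. The invariants pin down the $f$-vector of $X(k,d)$ completely (few faces, a single $k$-dimensional homology class), so one must exhibit a very economical simplicial cell complex: roughly, a triangulation of something homotopy equivalent to $S^k$ assembled from $d$-simplices with heavy face identifications, glued so that all intermediate homology vanishes and all the intermediate $h'$-numbers are zero. I would encode such a complex combinatorially via an edge-$(d+1)$-colored multigraph in the spirit of manifold crystallizations: vertices of the graph are the facets, color-$i$ edges record which pair of facets is glued along the ridge opposite the $i$-th vertex, and one reads off homology and $f$-numbers from the numbers of connected components of the subgraphs spanned by each subset of colors. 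The delicate points will be (i) arranging the color classes so that the resulting simplicial cell complex is Buchsbaum — i.e., every vertex link is Cohen--Macaulay, which via Lemma~\ref{Lemma:CMlinks} and Theorem~\ref{theorem:CMLinks} becomes a homological condition on links — and (ii) verifying that the $h'$-vector collapses to the two-term form, which I expect to follow from a careful bookkeeping identity relating the color-component counts of the crystallization graph to the $h'$-numbers. Once $X(k,d)$ is in hand, the assembly in the previous paragraph is routine, so essentially all the difficulty is concentrated in designing the crystallization and proving it has the advertised properties.
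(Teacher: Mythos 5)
Your outline of the easy direction and of the reduction is fine and matches the paper: necessity is Theorem~\ref{Theorem:NS} plus $h'_0=1$ and $h'_d=\betti_{d-1}$, and sufficiency reduces (via the two lemmas of Novik and Swartz quoted in Section~\ref{section:background}, or equivalently your Stanley-plus-connected-sum bookkeeping, which adds Betti numbers and $h'$-numbers correctly) to producing, for every $0\le k\le d-1$, a Buchsbaum poset $X(k,d)$ with $\betti_i=\delta_{ik}$ and $h'$-vector supported in degrees $0$ and $k+1$ with values $1$ and $\binom{d}{k+1}$.

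The genuine gap is that you never produce $X(k,d)$. For $k=0,1,d-2,d-1$ these were already known, so the entire content of the theorem beyond Novik--Swartz is the construction for the remaining $k$, and your proposal replaces it with a statement of intent: you say you would encode the complex by an edge-colored multigraph in the style of crystallizations and that the ``delicate points will be'' (i) making every vertex link Cohen--Macaulay and (ii) forcing the $h'$-vector to collapse, but no graph is exhibited and neither point is argued. This is exactly where the work lies. The paper's resolution is to take the graph $G(k,d)$ whose vertices are the binary words of length $d$ with first letter $1$ and exactly $k+1$ blocks, together with one extra vertex $\alpha$, with an edge labeled $j$ whenever two words differ only in position $j$ (and edges to $\alpha$ labeled by positions lying in blocks of size one); the poset is $X(k,d)=P(G(k,d))$. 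Buchsbaumness and the face-number collapse are then not checked directly on homology but via a combinatorial shelling argument: each deleted-color subgraph $G(k,d)_{[d]\setminus\{c\}}$ is connected and admits a graphical shelling in which every word has restriction set of size exactly $k$ (Theorem~\ref{Theorem:Mainshelling}), so each vertex link is a shellable simplicial cell complex with $h$-vector $(1,0,\ldots,0,\binom{d-1}{k},0,\ldots,0)$; Lemma~\ref{Lemma:CMlinks} then gives the Buchsbaum property, and Lemmas~\ref{Lemma:Linkhnumbers} and \ref{lemma:sufficiencies} convert the link data into the global $h$-vector and then into the asserted $h'$- and Betti numbers (using Theorem~\ref{Theorem:NS} itself to squeeze the Betti numbers). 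Without this construction and verification, or some substitute for it, your argument proves only what was already known.
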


\subsection{Constructing Buchsbaum posets}

To prove Theorem \ref{Theorem:Goal}, it suffices to construct, for any $\betti_i$ and $h'_i$ satisfying the conditions of the theorem, a Buchsbaum simplicial poset having those Betti numbers and $h'$-numbers. Novik and Swartz proved two lemmas that reduce this problem to certain `minimal' cases.

\begin{lemma} \label{question:classification}\cite[Lemma 7.8]{Novik-Swartz}
Let $P_1$ and $P_2$ be two disjoint $(d-1)$-dimensional Buchsbaum simplicial posets.  If $Q$ is obtained from $P_1$ and $P_2$ by identifying a facet of $P_1$ with a facet of $P_2$, then $Q$ is also a Buchsbaum poset.  Moreover, 
\begin{eqnarray*}
\widetilde{\beta}_i(Q) &=& \widetilde{\beta}_i(P_1) + \widetilde{\beta}_i(P_2), \qquad \text{for } i=0,1,\ldots, d-1, \text{ and} \\
h'_j(Q) &=& h'_j(P_1) + h'_j(P_2), \qquad \text{for } j=1,2,\ldots,d.  
\end{eqnarray*}
\end{lemma}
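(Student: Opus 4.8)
The plan is to treat the topological assertion---that $Q$ is Buchsbaum---separately from the numerical ones, and to dispatch the numerical claims first. Let $\sigma$ denote the glued facet of $Q$ together with all of its subfaces; as a simplicial poset $\sigma$ is a Boolean lattice $B_d$, i.e. a single closed $(d-1)$-simplex, and $|Q|=|P_1|\cup|P_2|$ with $|P_1|\cap|P_2|=|\sigma|$. Being a simplex, $\sigma$ is contractible, so $\widetilde{\beta}_i(\sigma)=0$ for all $i$, and a direct computation from \eqref{h-nums-formula} gives $h(\sigma)=(1,0,\dots,0)$. Counting faces yields $f_i(Q)=f_i(P_1)+f_i(P_2)-f_i(\sigma)$ for every $i$, while the reduced Mayer-Vietoris sequence of the decomposition $|Q|=|P_1|\cup|P_2|$, all of whose $|\sigma|$-terms vanish, gives $\widetilde{H}_i(Q;\field)\cong\widetilde{H}_i(P_1;\field)\oplus\widetilde{H}_i(P_2;\field)$ and hence $\widetilde{\beta}_i(Q)=\widetilde{\beta}_i(P_1)+\widetilde{\beta}_i(P_2)$ for all $i$. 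Since $h_j$ and $h'_j$ are linear functions of the $f$-numbers and Betti numbers by \eqref{h-nums-formula} and \eqref{hprime-numbers}, linearity gives $h'_j(Q)=h'_j(P_1)+h'_j(P_2)-h'_j(\sigma)$, and $h'(\sigma)=(1,0,\dots,0)$, so $h'_j(Q)=h'_j(P_1)+h'_j(P_2)$ for $1\le j\le d$ (and $h'_0(Q)=1$), completing the numerical claims.

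For the Buchsbaum property I would invoke Lemma~\ref{Lemma:CMlinks}: it suffices to check that $\lk_Q(v)$ is Cohen-Macaulay for every atom $v$ of $Q$. If $v$ is an atom of $P_1$ not lying on the glued facet $F$, then $\lk_Q(v)=\lk_{P_1}(v)$, which is Cohen-Macaulay since $P_1$ is Buchsbaum (and symmetrically for atoms of $P_2$). If $v$ is an atom of $F$, then $\lk_Q(v)$ is obtained from $\lk_{P_1}(v)$ and $\lk_{P_2}(v)$---both Cohen-Macaulay, being links of atoms in Buchsbaum posets---by identifying the facet $[v,F]$ of each. So the whole statement reduces to a sublemma: \emph{if two Cohen-Macaulay simplicial posets of the same rank are glued along a facet, the result is Cohen-Macaulay}. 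I expect this sublemma to be the only real obstacle; everything else is bookkeeping with \eqref{h-nums-formula}, \eqref{hprime-numbers}, and Mayer-Vietoris.

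I would prove the sublemma by induction on the rank, using the poset form of Reisner's criterion: a simplicial poset is Cohen-Macaulay over $\field$ exactly when $\widetilde{H}_i(\lk(\gamma);\field)=0$ for every face $\gamma$ and every $i<\dim\lk(\gamma)$ (apply Theorem~\ref{theorem:CMLinks} to the order complex and note that it is enough to impose the condition on chains of at most one element). Write $R=R_1\cup R_2$ with $R_1\cap R_2=[\hat{0},\tau]$ the glued facet and its faces. If $\gamma\not\leq\tau$ then $\lk_R(\gamma)$ equals $\lk_{R_1}(\gamma)$ or $\lk_{R_2}(\gamma)$, hence is Cohen-Macaulay; if $\hat{0}<\gamma<\tau$ then $\lk_R(\gamma)$ is itself a gluing of the Cohen-Macaulay posets $\lk_{R_1}(\gamma)$ and $\lk_{R_2}(\gamma)$ along the common facet $[\gamma,\tau]$, now of strictly smaller rank, so it is Cohen-Macaulay by the inductive hypothesis; if $\gamma=\tau$ the link is a single point; and if $\gamma=\hat{0}$ the required vanishing of $\widetilde{H}_i(R)$ for $i<\dim R$ follows from the Mayer-Vietoris isomorphism above together with the Cohen-Macaulayness of $R_1$ and $R_2$. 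The base case (rank one, i.e. gluing two nonempty discrete vertex sets along a vertex) is immediate. Throughout, the only points needing a little care are that $|P_1|\cap|P_2|$ is exactly the closed simplex $|\sigma|$ (true because closed cells of a simplicial cell complex meet along the union of their common faces, and $P_1\cap P_2=\sigma$ as posets), that Mayer-Vietoris is applied to a pair of subcomplexes of a regular CW-complex, and that one uses the correct poset-theoretic version of the link criterion.
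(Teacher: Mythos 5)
The paper itself offers no proof of this lemma---it is quoted directly from Novik and Swartz \cite[Lemma 7.8]{Novik-Swartz}---so there is no internal argument to compare against; judged on its own, your proof is correct and self-contained. The numerical half is the expected bookkeeping: $f_i(Q)=f_i(P_1)+f_i(P_2)-\binom{d}{i+1}$, the reduced Mayer--Vietoris sequence over the contractible intersection $|\sigma|$ gives Betti additivity (valid even if the $P_i$ are disconnected, since the intersection is nonempty), and then \eqref{h-nums-formula} and \eqref{hprime-numbers} together with $h(\sigma)=(1,0,\ldots,0)$ and $\widetilde{\beta}_*(\sigma)=0$ yield $h'_j(Q)=h'_j(P_1)+h'_j(P_2)$ for $1\le j\le d$. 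The real content is your gluing sublemma for Cohen--Macaulay posets, and the induction is sound: the poset form of Reisner's criterion you invoke is legitimate, justified by the same join decomposition $\lk_{\Delta(P)}(\text{chain})=(\text{joins of boundaries of simplices})*\Delta(\lk_P(\gamma_{\max}))$ that the paper itself uses in proving Lemma \ref{Lemma:CMlinks}; your case analysis on $\gamma$ versus $\tau$ is exhaustive, the identification $\lk_R(\gamma)=\lk_{R_1}(\gamma)\cup_{[\gamma,\tau]}\lk_{R_2}(\gamma)$ for $\gamma<\tau$ is correct, and the dimension bookkeeping works because Cohen--Macaulay posets are pure, so the two links have equal rank and $[\gamma,\tau]$ is a facet of each. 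Two cosmetic remarks only: $\lk_R(\tau)$ is the trivial poset $\{\tau\}$, i.e.\ the $(-1)$-dimensional complex rather than a point; and the induction could be bypassed entirely, since for every $\gamma<\tau$ Mayer--Vietoris applied directly to $\lk_R(\gamma)$ already gives the vanishing of homology below top dimension required by the criterion. Your use of Lemma \ref{Lemma:CMlinks} in both directions (atom links of the Buchsbaum $P_i$ are Cohen--Macaulay; Cohen--Macaulayness of all atom links of $Q$ gives Buchsbaumness) is exactly what that lemma provides, so the reduction is clean.
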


\begin{lemma} \cite[Lemma 7.9]{Novik-Swartz}
Let $P$ be a $(d-1)$-dimensional Buchsbaum simplicial poset, and let $g_1', \ldots, g_d'$ be nonnegative integers such that $g_j' \geq h'_j(P)$ for all $j = 1,\ldots, d$.  Then there exists a $(d-1)$-dimensional Buchsbaum simplicial poset $Q$ whose Betti numbers, except possibly for $\widetilde{\beta}_{d-1}$, coincide with those of $P$ and such that $h'_j(Q) = g'_j$ for all $1 \leq j \leq d$.
\end{lemma}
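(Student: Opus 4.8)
The plan is to realize the coordinate-wise deficiency $g_j' - h_j'(P)$ as the $h$-vector of an auxiliary Cohen--Macaulay poset, and then graft that poset onto $P$ via the facet-identification operation of Lemma \ref{question:classification}, which is additive on both $h'$-numbers and Betti numbers.

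First I would set $a_j := g_j' - h_j'(P)$ for $j = 1, \ldots, d$; by hypothesis each $a_j$ is a nonnegative integer. Adjoining $a_0 := 1$, the vector $(a_0, a_1, \ldots, a_d)$ consists of nonnegative integers with leading entry $1$, so Stanley's characterization \cite[Theorem 3.10]{Stanley-CM-posets} supplies a Cohen--Macaulay simplicial poset $R$ of rank $d$ with $h_j(R) = a_j$ for every $j$. Being Cohen--Macaulay, $R$ is in particular Buchsbaum, its reduced homology is concentrated in top degree (so $\widetilde{\beta}_i(R) = 0$ for $i \leq d-2$), and consequently its $h$- and $h'$-vectors coincide, giving $h_j'(R) = a_j$ for all $j$.

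Next I would form $Q$ by applying Lemma \ref{question:classification} with $P_1 = P$ and $P_2 = R$, identifying a facet of $P$ with a facet of $R$. Both posets are pure of dimension $d-1$, so facets of the correct dimension exist, and the lemma guarantees that $Q$ is again a $(d-1)$-dimensional Buchsbaum poset. The additivity in that lemma then gives, for $j = 1, \ldots, d$,
\[
h_j'(Q) = h_j'(P) + h_j'(R) = h_j'(P) + a_j = g_j',
\]
as required, while for $0 \leq i \leq d-1$ it gives $\widetilde{\beta}_i(Q) = \widetilde{\beta}_i(P) + \widetilde{\beta}_i(R)$. Since $\widetilde{\beta}_i(R) = 0$ for $i \leq d-2$, this forces $\widetilde{\beta}_i(Q) = \widetilde{\beta}_i(P)$ for every $i \leq d-2$, leaving only $\widetilde{\beta}_{d-1}$ free to change --- exactly the behavior claimed.

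Because the two key inputs (Stanley's theorem and the gluing lemma) are available as black boxes, the argument is essentially bookkeeping, and I anticipate no serious obstacle; the content lies entirely in assembling these tools correctly. The only points that warrant a sentence of care are the existence of facets to identify (guaranteed by purity of $P$ and $R$), the automatic equality $h_0'(Q) = 1$ holding for any simplicial poset (so that no hypothesis on $g_0'$ is needed), and the degenerate case in which all $a_j = 0$: there one may take $R$ to be a single $(d-1)$-simplex, whose $h$-vector is $(1, 0, \ldots, 0)$, so that the gluing returns $Q = P$ and the conclusion holds trivially.
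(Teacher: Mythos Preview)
Your argument is correct. The paper itself does not prove this lemma---it is quoted verbatim from \cite[Lemma 7.9]{Novik-Swartz} and used as a black box---so there is no in-paper proof to compare against; that said, your approach (use Stanley's characterization to realize the deficiency vector as the $h$-vector of a Cohen--Macaulay poset $R$, then glue $R$ to $P$ along a facet via Lemma~\ref{question:classification}) is exactly the argument Novik and Swartz give, so you have reconstructed the intended proof.
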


Combining these we see that  to prove Theorem \ref{Theorem:Goal} it is enough to construct a family of Buchsbaum simplicial posets $X(k,d)$ for all $d$ and all $0 \leq k \leq d-1$ such that 
\begin{eqnarray*}
\widetilde{\beta}_i(X(k,d)) = \begin{cases} 1, & \text{ if } i = k \\ 0, & \text{ if } i\neq k,\end{cases}
&\text{ and }& 
h'_j(X(k,d)) = \begin{cases} {d \choose j}, & \text{ if } j = 0, k+1 \\ 0, & \text{ otherwise}. \end{cases}
\end{eqnarray*}

For $X(0,d)$ we may take the disjoint union of two $(d-1)$-dimensional simplices, and for $X(d-1,d)$ we may take two $(d-1)$-simplices identified along their boundaries.   Novik and Swartz also gave constructions for $X(1,d)$ \cite[Lemma 7.6]{Novik-Swartz} and $X(d-2,d)$ \cite[Lemma 7.7]{Novik-Swartz} for all $d$, along with an ad-hoc construction for $X(2,5)$.  

In this paper, we provide a unified construction of $X(k,d)$ for all $d$ and all $0<k<d$. 

\begin{theorem} \label{Theorem:main}
For all $d \geq 2$ and all $0 \leq k \leq d-1$ there exists a Buchsbaum simplicial poset $X(k,d)$ with the following properties.
\begin{enumerate}
\item For all $0 \leq i \leq d-1$ and all $0 \leq j \leq d$, 
\begin{eqnarray*}
\widetilde{\beta}_i(X(k,d)) = \begin{cases} 1, & \text{ if } i = k \\ 0, & \text{ if } i\neq k,\end{cases}
&\text{ and }& 
h'_j(X(k,d)) = \begin{cases} {d \choose j}, & \text{ if } j = 0, k+1 \\ 0, & \text{ otherwise}. \end{cases}
\end{eqnarray*}
\item The link of each atom of $X(k,d)$ is shellable.
\item For each atom $v$ of $X(k,d)$,  $$h_j(\lk_{X(k,d)}(v)) = \begin{cases} {d-1 \choose j}, & \text{ if } j = 0,k \\ 0, & \text{ otherwise.} \end{cases}$$
\end{enumerate}
\end{theorem}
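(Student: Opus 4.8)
The plan is to build $X(k,d)$ explicitly by encoding a pure $(d-1)$-dimensional simplicial poset as an edge-labeled multigraph (a ``crystallization''-type gadget): vertices of the graph are the facets of the poset, and a $c$-labeled edge between two facet-vertices records that those two facets are glued along the codimension-one face obtained by omitting their $c$-th vertex, with colors drawn from $\{0,1,\dots,d-1\}$. For this to define a genuine simplicial poset one needs the graph to be $d$-regular with exactly one edge of each color at every vertex (so that every facet has all $d$ of its ridges accounted for), plus consistency conditions ensuring the identifications of lower faces are well defined. I would first set up this dictionary carefully in Section \ref{section:sect4} and record precisely which graph-theoretic data yields a valid $P$, and how the face numbers $f_{i-1}(P)$ are read off from the numbers of ``$C$-colored connected components'' of the graph for each subset $C \subseteq \{0,\dots,d-1\}$ (the analogue of the classical crystallization formula). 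This gives a combinatorial handle on $h(P)$, hence on $h'(P)$ via \eqref{h-nums-formula} and \eqref{hprime-numbers}.

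Next I would reduce the three conclusions to statements about links. By Lemma \ref{Lemma:CMlinks}, $P$ is Buchsbaum as soon as every atom link is Cohen-Macaulay, and conclusion (2) — shellability of each atom link — is a strictly stronger property that implies Cohen-Macaulayness. So the core of the argument is to design the graph so that, for every atom $v$, $\lk_P(v)$ is a shellable $(d-2)$-dimensional simplicial poset with $h$-vector equal to $\binom{d-1}{0},0,\dots,0,\binom{d-1}{k},0,\dots,0$ — i.e.\ conclusion (3). Once (3) holds for all atoms, conclusion (1) should follow from a global count: the Betti numbers $\betti_i(X(k,d))$ can be extracted from the difference between $h'$ and the contribution of the links (using that a Buchsbaum poset's $h'$-vector decomposes via the links and the Dehn--Sommerville-type relations Novik--Swartz established), and the $h'$-numbers are then forced to be $\binom{d}{j}$ exactly for $j = 0, k+1$. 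Concretely I expect $X(k,d)$ to be assembled from a small number of simplices — on the order of $2\binom{d}{\le k}$ or similar — glued in a pattern that, at each vertex, produces a link combinatorially equivalent to (a subdivision-free analogue of) the boundary of the $(k)$-skeleton construction realizing $h$-vector $\binom{d-1}{0},0,\dots,0,\binom{d-1}{k},0,\dots,0$; a natural candidate for the link is a poset version of a triangulated $S^{k-1} \times D^{d-1-k}$ or the ``minimal'' Cohen--Macaulay poset with that $h$-vector given by Stanley's theorem, and one checks this candidate is shellable directly.

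The main obstacle, I expect, is not Buchsbaumness of the total space (which is purely local, handled by Lemma \ref{Lemma:CMlinks}) but simultaneously controlling \emph{all} atom links: a gluing pattern that gives the right link at one vertex can easily produce the wrong $h$-vector, or a non-shellable link, at another vertex, because every facet participates in $d$ different identifications. Making the construction symmetric enough that all $d \cdot (\text{number of facets})$ vertex-links are forced to be isomorphic (or at least all to have the prescribed $h$-vector) is the delicate point, and is presumably where the crystallization viewpoint pays off: one arranges the color pattern so that the ``link graph'' at each vertex $v$ — obtained by the standard operation of deleting $v$'s star and tracking the induced colored components — is independent of $v$ up to isomorphism. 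I would carry out the verification of (3) by exhibiting an explicit shelling order on the facets of $\lk_P(v)$ and computing its restriction map, then verify (1) by the global $f$-to-$h'$ computation, and finally note (2) is immediate once the shelling in (3) is in hand. Routine but necessary: checking the consistency conditions that make the multigraph a bona fide simplicial poset, and the binomial identities relating the component counts to the claimed $h'$-vector.
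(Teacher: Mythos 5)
Your overall strategy---reduce everything to controlling the atom links, get Buchsbaumness from Lemma \ref{Lemma:CMlinks}, and deduce (1) from (3)---is the same reduction the paper uses, but as it stands the proposal has two genuine problems. First, your graph-theoretic setup is wrong in a way that matters: you require the facet graph to be $d$-regular with exactly one edge of each color at every vertex. No such condition is needed (for an arbitrary edge-labeled multigraph $G$ the poset $P(G)$ of colored connected components is automatically simplicial, with unmatched colors at a vertex simply producing boundary ridges, and no extra ``consistency conditions'' arise), and worse, it is fatal: a properly $d$-edge-colored $d$-regular graph forces every $(d-2)$-face into exactly two facets, whereas the paper shows in Section \ref{section:manifolds} that for $d>4$ and $\frac{d}{2}\leq k<d-1$ any Buchsbaum poset satisfying property (1) must contain a $(d-2)$-face lying in at least three facets. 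So within your stated framework $X(k,d)$ cannot exist for most pairs $(k,d)$; the construction has to exploit exactly the freedom you ruled out. Second, the existence statement is the entire content of the theorem, and you do not produce a candidate: ``a poset version of $S^{k-1}\times D^{d-1-k}$'' or ``the minimal Cohen--Macaulay poset with that $h$-vector'' is a hope, not a construction, and your guessed facet count does not match what is actually needed (the paper uses $\binom{d-1}{k}+1$ facets, indexed by binary words of length $d$ with $k+1$ blocks together with one extra vertex $\alpha$, and proves by an explicit graphical shelling that every link $P(G_{[d]\setminus\{c\}})$ is shellable with all restriction sets of size $k$; this is Theorem \ref{Theorem:Mainshelling} plus Proposition \ref{Prop:GraphicalShellings}).

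The passage from (3) to (1) also needs two specific ingredients you do not identify. One cannot ``extract the Betti numbers from a global count'': knowing the link $h$-vectors only determines $h(P)$, via the short simplicial $h$-vector relation $\sum_{v} h_{i-1}(\lk_P(v)) = i\,h_i(P) + (d-i+1)h_{i-1}(P)$ (Lemma \ref{Lemma:Linkhnumbers}), which yields $h_i(P)=0$ for $0<i\leq k$ and $h_i(P)=(-1)^{i-k+1}\binom{d}{i}$ for $k<i\leq d$. The Betti numbers are then not computed but \emph{forced}: one runs an induction using the Novik--Swartz inequalities $h'_j(P)\geq \binom{d}{j}\widetilde{\beta}_{j-1}(P)$ from Theorem \ref{Theorem:NS}, where the alternating signs of the $h_i$ above squeeze each $\widetilde{\beta}_i$ to $0$ for $i\neq k$ and to exactly $1$ for $i=k$ (Lemma \ref{lemma:sufficiencies}). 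Without this mechanism (or an actual homology computation for a concrete complex), conclusion (1) does not follow from (3), so the proposal as written has a gap at this step as well as at the construction itself.
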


\section{Combinatorial analysis of $X(k,d)$}\label{section:sect3}

Our first step in proving Theorem \ref{Theorem:main} is to show that condition (1) will hold for any Buchsbaum simplicial poset having the correct $h$-vector.

\begin{lemma} \label{lemma:sufficiencies} Suppose $0<k<d$, and $P$ is a Buchsbaum simplicial poset of rank $d$ with $h_i(P) = 0$ for $0 < i \leq k$, and $h_i(P) = (-1)^{i-k+1}\binom{d}{i}$ for $k < i \leq d$. Then
\begin{eqnarray*}
\betti_i(P) = \begin{cases} 1, & \text{ if } i = k \\ 0, & \text{ if } i\neq k,\end{cases}
&\text{ and }& 
h'_j(P) = \begin{cases} {d \choose j}, & \text{ if } j = 0, k+1 \\ 0, & \text{ otherwise}. \end{cases}\end{eqnarray*}

\begin{proof}

We prove the claims simultaneously by induction.  As $P$ is non-empty, $\betti_{-1}(P) =0$, while $h_0' =1$ follows from the definitions.  Now suppose $0 < l <k$, and $\betti_j(P) = 0$ for $j <l$. It follows from the definitions that $h_{l+1}' = h_{l+1} = 0$. But by Theorem \ref{Theorem:NS}, we have
\[
\binom{d}{l+1}\betti_{l}(P)  \leq h'_{l+1}(P) = 0,
\]
and hence $\betti_l(P)= 0$. Then by induction we have $h_{l+1}'(P) = 0$ and $\betti_l(P)= 0$ for $0 \leq l <k$.  

Next, as the Betti numbers $\betti_i(P)$ vanish for $i <k$, we have $h_{k+1}'(P) = h_{k+1}(P)$, so 
\[
\binom{d}{k+1}\betti_k(P)  \leq h'_{k+1}(P) = h_{k+1}(P) = \binom{d}{k+1},
\]
and so $\betti_k(P) \leq 1$. Suppose $\betti_k(P) =0$. Then $h'_{k+2}(P) = h_{k+2}(P)$, and

\[
\binom{d}{k+2}\betti_k(P) \leq h_{k+2}(P) = -\binom{d}{k+2},
\]
a contradiction, as Betti numbers are non-negative. Hence we must have  $\betti_k(P) =1$. Finally, suppose $k < l \leq d-1$ and that  $\betti_{l'}(P) = 0$ for all $l'$ with $k \neq l' < l$. Then
\begin{align*}
\binom{d}{l+1}\betti_l(P) & \leq h'_{l+1}(P)\\
&= h_{l+1}(P) + \binom{d}{l+1}\sum_{i=0}^l (-1)^{l-i}\betti_{i-1}(P)\\
&=  h_{l+1}(P) + \binom{d}{l+1} (-1)^{l-k-1}\betti_{k}(P)\\
& = (-1)^{l+1-k+1}\binom{d}{l+1} +  (-1)^{l-k-1}\binom{d}{l+1}\\
& = 0.
\end{align*}
Hence by induction we have $\betti_l(P) = h'_{l+1}(P) = 0$ for $k<l <d$.

\end{proof}
\end{lemma}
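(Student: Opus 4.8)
The plan is to prove both assertions simultaneously by a single induction that walks up through the degrees $j = 0, 1, \ldots, d$, at each stage using the known value of $h_j(P)$ together with the defining formula \eqref{hprime-numbers} for $h'_j(P)$ and the Novik--Swartz bound of Theorem \ref{Theorem:NS} to pin down the next Betti number and the next $h'$-number. The base case is immediate: $P$ is nonempty so $\betti_{-1}(P) = 0$, and $h'_0(P) = h_0(P) = 1$ directly from the definitions.

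First I would handle the range below $k$. Suppose inductively that $\betti_{j-1}(P) = 0$ for all $j - 1 < l$ where $0 < l < k$. Since all lower Betti numbers vanish, the correction term in \eqref{hprime-numbers} collapses and $h'_{l+1}(P) = h_{l+1}(P) = 0$ by hypothesis on the $h$-vector. But Theorem \ref{Theorem:NS} gives $\binom{d}{l+1}\betti_l(P) \leq h'_{l+1}(P) = 0$, and since Betti numbers are non-negative, $\betti_l(P) = 0$. This closes the induction for $0 \leq l < k$ and simultaneously records $h'_{l+1}(P) = 0$ for $0 \leq l < k$.

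Next comes the crucial step at degree $k$. With $\betti_i(P) = 0$ for $i < k$ we again have $h'_{k+1}(P) = h_{k+1}(P) = \binom{d}{k+1}$ (using the sign $(-1)^{i-k+1}$ at $i = k+1$), so Theorem \ref{Theorem:NS} forces $\betti_k(P) \leq 1$. To rule out $\betti_k(P) = 0$ I would look one degree further: if $\betti_k(P)$ were $0$, then still $h'_{k+2}(P) = h_{k+2}(P) = -\binom{d}{k+2} < 0$, contradicting $\binom{d}{k+2}\betti_{k+1}(P) \le h'_{k+2}(P)$ and non-negativity. Hence $\betti_k(P) = 1$. Finally, for $k < l \leq d-1$, assuming $\betti_{l'}(P) = 0$ for all $l' \neq k$ with $l' < l$, the sum in \eqref{hprime-numbers} contributes only the $i = k+1$ term, namely $\binom{d}{l+1}(-1)^{l-k-1}\betti_k(P) = \binom{d}{l+1}(-1)^{l-k-1}$; adding this to $h_{l+1}(P) = (-1)^{l-k}\binom{d}{l+1}$ yields $h'_{l+1}(P) = 0$, whence $\betti_l(P) = 0$ by Theorem \ref{Theorem:NS}. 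This completes the induction and gives the claimed $h'$-vector, with the top value $h'_d(P) = \betti_{d-1}(P) = 0$ consistent with the general identity noted after \eqref{hprime-numbers}.

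The only real subtlety is bookkeeping the signs in the alternating sum of \eqref{hprime-numbers} so that the hypothesized values $h_i(P) = (-1)^{i-k+1}\binom{d}{i}$ cancel the Betti correction exactly; there is no deeper obstacle, since Theorem \ref{Theorem:NS} and non-negativity of Betti numbers do all the heavy lifting once the induction is set up. One should double-check the two boundary degrees $j = k+1$ and $j = k+2$ by hand (as above) rather than folding them into the generic inductive step, since the generic step relies on already knowing $\betti_k(P) = 1$.
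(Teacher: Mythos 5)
Your proposal is correct and follows essentially the same induction as the paper's own proof: vanishing of $\betti_l(P)$ and $h'_{l+1}(P)$ below $k$, the identity $h'_{k+1}(P)=h_{k+1}(P)=\binom{d}{k+1}$ forcing $\betti_k(P)\le 1$ via Theorem \ref{Theorem:NS}, ruling out $\betti_k(P)=0$ by the negative sign of $h_{k+2}(P)$, and the exact cancellation in \eqref{hprime-numbers} for degrees above $k$. The only difference is cosmetic: you state the Novik--Swartz bound at degree $k+2$ with $\betti_{k+1}(P)$ on the left, which in fact corrects a small typo in the paper's displayed inequality.
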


Now to construct  $X(k,d)$ we need only build a poset that is Buchsbaum and has the correct $h$-vector. We will achieve both by controlling the links of the vertices of $X(k,d)$.

\begin{lemma} \label{Lemma:Linkhnumbers} Let $0 < k < d$ be integers and let $P$ be a simplicial poset of rank $d$ with $f_0(P) = d$.  If 
\[
h_i(\lk_P(v)) = \begin{cases} \binom{d-1}{i}, & \text{ if } i=0,k; \\ 0, & \text{ otherwise,} \end{cases}
\]
for each atom $v$ in $P$, then $h_i(P) = 0$ for $0 < i \leq k$, and $h_i(P) = (-1)^{i-k+1}\binom{d}{i}$ for $k < i \leq d$.

\begin{proof} Extending the work in \cite{Hersh-Novik-short-simplicial} on the short simplicial $h$-vector, it was shown by Swartz \cite{Swartz-lower-bounds} that if $\Delta$ is a simplicial complex of dimension $d-1$, then
\begin{align}
\sum_{v \in V} h_{i-1}(\lk_{\Delta}(v)) = ih_i(\Delta) + (d-i+1)h_{i-1}(\Delta), \label{eq-short-simplicial}
\end{align}
where $V$ is the set of vertices of $\Delta$. By viewing a simplicial poset as the face poset of a simplicial cell complex it is easily seen that identical arguments give the same relation when $\Delta$ is replaced by a simplicial poset $P$ and $V$ is taken to be its set of atoms (i.e., the vertices of the simplicial cell complex).

We now prove the claim by induction on $i$. We must have $h_0(P) = 1$, and by \eqref{eq-short-simplicial} we have 
\begin{align*}
d &= \sum_{v \in V} h_0(\lk_P(v))\\  &= h_1(P) + dh_0(P)\\ &= h_1(P) + d,
\end{align*}
so $h_1(P) = 0$. Now for $1 < i <k+1$ by induction we have
\begin{align*}
0 &= \sum_{v \in V} h_{i-1}(\lk_P(v))\\ &= ih_i(P) + (d-i+1)h_{i-1}(P) \\ &= ih_i(P),
\end{align*}
thus $h_i(P) = 0$.

Next, when $i=k$,
\begin{align*}
d\binom{d-1}{k} &= \sum_{v \in V} h_{k}(\lk_P(v))\\ &= (k+1)h_{k+1}(P) + (d-k)h_{k}(P)\\ &= (k+1)h_{k+1}(P),
\end{align*}
and therefore
\[
h_{k+1}(P) = \frac{d}{k+1}\binom{d-1}{k} = \binom{d}{k+1}.
\]

Finally, for $k+1 < i \leq d$ we have, by induction
\begin{align*}
0 &= \sum_{v \in V} h_{i-1}(\lk_P(v))\\
& = ih_i(P) + (d-i+1)h_{i-1}(P)\\
& = ih_i(P) + (d-i+1)(-1)^{i-k}\binom{d}{i-1}
\end{align*}
and so
\begin{align*}
h_i(P) & = (-1)^{k-i+1}\frac{d-i+1}{i}\binom{d}{i-1}\\
& = (-1)^{i-k+1}\binom{d}{i},
\end{align*}
as desired.

\end{proof} 

\end{lemma}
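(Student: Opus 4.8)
The plan is to reduce the whole statement to a single linear recursion via the ``short simplicial $h$-vector'' identity of Hersh--Novik \cite{Hersh-Novik-short-simplicial} and Swartz \cite{Swartz-lower-bounds}, which for a $(d-1)$-dimensional simplicial complex $\Delta$ with vertex set $V$ reads
\[
\sum_{v \in V} h_{i-1}(\lk_{\Delta}(v)) = i\,h_i(\Delta) + (d-i+1)\,h_{i-1}(\Delta),
\]
and which carries over verbatim to simplicial posets $P$ (with $V$ the set of atoms) by reading the same argument on the simplicial cell complex $|P|$. Under the hypotheses, the left-hand side is completely determined for every $i$: since $|V| = f_0(P) = d$ and each $h_j(\lk_P(v))$ equals $\binom{d-1}{0}=1$ for $j=0$, equals $\binom{d-1}{k}$ for $j=k$, and vanishes otherwise, the sum $\sum_{v} h_{i-1}(\lk_P(v))$ is $d$ when $i=1$, is $d\binom{d-1}{k}$ when $i=k+1$, and is $0$ for all other $i$. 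The identity then becomes a one-step recursion expressing $h_i(P)$ through $h_{i-1}(P)$, which I would run as an induction on $i$ starting from the base value $h_0(P)=1$.

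Carrying out the induction: the case $i=1$ gives $d = h_1(P) + d\,h_0(P)$, so $h_1(P)=0$. For $2 \le i \le k$ the left-hand side vanishes and $h_{i-1}(P)=0$ by induction, hence $i\,h_i(P)=0$ and $h_i(P)=0$; this yields $h_i(P)=0$ for all $0<i\le k$. At $i=k+1$ the left-hand side is $d\binom{d-1}{k}$ while $h_k(P)=0$, so $h_{k+1}(P) = \frac{d}{k+1}\binom{d-1}{k} = \binom{d}{k+1}$, which agrees with $(-1)^{(k+1)-k+1}\binom{d}{k+1}$. Finally, for $k+2\le i\le d$ the left-hand side vanishes and by induction $h_{i-1}(P) = (-1)^{i-k}\binom{d}{i-1}$, so $i\,h_i(P) = -(d-i+1)(-1)^{i-k}\binom{d}{i-1}$; applying the binomial identity $\frac{d-i+1}{i}\binom{d}{i-1} = \binom{d}{i}$ gives $h_i(P) = (-1)^{i-k+1}\binom{d}{i}$, closing the induction.

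I do not expect a deep obstacle here; the two points that need genuine attention are (i) confirming that Swartz's formula holds for simplicial posets and not merely for complexes, which is routine because its proof is a pure face-count together with the definition \eqref{h-nums-formula} of the $h$-vector, both insensitive to the passage from complexes to simplicial cell complexes; and (ii) the bookkeeping of the sign $(-1)^{i-k+1}$ and of the identity $\frac{d-i+1}{i}\binom{d}{i-1}=\binom{d}{i}$ across the three regimes $i\le k$, $i=k+1$, and $i>k+1$, together with a separate check of the degenerate case $k=1$, where the middle regime $2\le i\le k$ is empty so that the computation $h_1(P)=0$ feeds directly into the $i=k+1$ step.
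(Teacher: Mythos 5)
Your proposal is correct and follows essentially the same route as the paper: the short simplicial $h$-vector identity of Hersh--Novik and Swartz, transferred to simplicial posets via the simplicial cell complex, and then a straightforward induction on $i$ through the three regimes $i \le k$, $i = k+1$, and $i > k+1$ using $\frac{d-i+1}{i}\binom{d}{i-1} = \binom{d}{i}$. The only differences are cosmetic (you spell out the degenerate case $k=1$ and the full left-hand-side bookkeeping explicitly), so there is nothing to add.
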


\section{Constructing $X(k,d)$}\label{section:sect4}

To construct our simplicial posets $X(k,d)$ we will adopt a graph-theoretic approach, related to the method of crystallizations of manifolds (\cite{FGG-crystallizations} is a good reference) and the graphical posets of  \cite{Murai-simplicial-cell}. However we will allow less restricted classes of graphs, as we do not require that our simplicial cell complexes are even pseudomanifolds.

\begin{definition} Let $G$ be a finite connected multigraph whose edges are labeled by colors in $[d]$. For any $S \subseteq [d]$, let $G_S$ be the restriction of $G$ to the edges whose label belongs to $S$ (and keeping all vertices of $G$). We define a poset $P(G)$ as follows: the elements of $P(G)$ are pairs $(H, S)$, where $S \subset [d]$ and $H$ is a connected component of $G_S$, ordered by $(H, S) \leq (H', S')$ if $S' \subseteq S$ and $H'$ is a subgraph of $H$.

\end{definition}

By a straightforward argument as in \cite{Murai-simplicial-cell} one may show that $P(G)$ is a simplicial poset of rank $d$ (though that paper only considers `admissible' graphs, that assumption is not needed here). It is also helpful to consider the interpretation of the graph in terms of the simplicial cell complex associated to $P(G)$.

The maximal elements of $P(G)$ are those of the form $(F, \emptyset)$, where $F$ is a single vertex of  $G$. Thus the facets of $P(G)$ correspond to the vertices of $G$, and we will from now on refer to the facet $(F, \emptyset)$ as  `the facet $F$'. The unique minimal element of $P(G)$ is $(G, [d])$, and the vertices (i.e. atoms) of $P(G)$ are pairs of the form $(H, [d] \setminus \{c\})$. We fix the coloring of the vertices of $P(G)$ so that $(H, [d] \setminus \{c\})$ receives color $c$. It follows immediately from the definitions that no face of $P(G)$ contains two vertices of the same color.  

If $F$ is a facet of $P(G)$ and $S \subseteq [d]$, let $F_S = (H, [d] \setminus S)$, where $H$ is the connected component of $G_{[d] \setminus S}$ containing $F$. Then $F_S$ corresponds to the unique face of $F$ whose vertices have the colors in $S$.

Taking all of this together, we see that the simplicial cell complex $P(G)$ is constructed from $G$ in the following way: start with a collection of $(d-1)$-simplices indexed by the vertices of $G$, and identify each of their vertex sets with $[d]$.  If two vertices $v$ and $w$ in $G$ are connected by an edge labeled $c$, identify the face of the simplex $v$ labeled $[d] \setminus \{c\}$ with that of $w$ in the natural way. Then the resulting simplicial cell complex has face poset $P(G)$. We will use these two ways of viewing $P(G)$ interchangeably. In particular, notice that if $F$ and $F'$ are facets of $P(G)$, then $F_S = F'_S$ if and only if $F$ and $F'$ are in the same connected component of $G_{[d] \setminus S}$; in other words, if there is a path from $F$ to $F'$ in $G$ that uses no edges colored by $S$.

We will often construct $G$ so that  $G_{[d] \setminus \{c\}}$ is connected for all $c \in [d]$. In this case $P(G)$ has exactly one vertex of each color, and we will identify the vertices of $P(G)$ with the elements of $[d]$.

\begin{lemma} \label{lemma:graphlinks}
 Let $G$ be an edge-colored multigraph such that $G_{[d] \setminus c}$ is connected for each $c \in [d]$. Then for each $c \in [d]$, $\lk_{P(G)}(c) = P(G_{[d] \setminus c})$.
\end{lemma}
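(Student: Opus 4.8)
The plan is to identify both posets explicitly as sets of pairs $(H,S)$ and observe that they coincide — underlying set and partial order alike.

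First I would use the connectivity hypothesis to pin down the atom $c$: since $G_{[d]\setminus\{c\}}$ is connected, it is its own unique connected component, so the atom of $P(G)$ of color $c$ is the pair $\left(G_{[d]\setminus\{c\}},\, [d]\setminus\{c\}\right)$. By the definition of the link, $\lk_{P(G)}(c)$ consists of all elements $(H,S)\in P(G)$ with $\left(G_{[d]\setminus\{c\}},\,[d]\setminus\{c\}\right)\le (H,S)$, which by the definition of the order on $P(G)$ means $S\subseteq [d]\setminus\{c\}$ and $H$ is a subgraph of $G_{[d]\setminus\{c\}}$. The key point is that the second condition is automatic: if $S\subseteq[d]\setminus\{c\}$ then $G_S$ is a subgraph of $G_{[d]\setminus\{c\}}$ (the same vertex set, a subset of the edges), so any connected component $H$ of $G_S$ is a subgraph of $G_{[d]\setminus\{c\}}$. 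Hence
\[
\lk_{P(G)}(c) = \left\{(H,S)\in P(G)\ :\ S\subseteq [d]\setminus\{c\}\right\},
\]
with the order induced from $P(G)$.

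Next I would unwind $P(G_{[d]\setminus\{c\}})$, viewing $G_{[d]\setminus\{c\}}$ as a multigraph colored by the $(d-1)$-element set $[d]\setminus\{c\}$ (which, after an order-isomorphism with $[d-1]$, is covered by the stated definition). Its elements are the pairs $(H,S)$ with $S\subseteq[d]\setminus\{c\}$ and $H$ a connected component of $(G_{[d]\setminus\{c\}})_S$. Since restricting twice is the same as restricting once — one has $(G_{[d]\setminus\{c\}})_S = G_{S\cap([d]\setminus\{c\})} = G_S$ because $S\subseteq[d]\setminus\{c\}$, the vertex set being $V(G)$ throughout — these are exactly the pairs $(H,S)$ with $S\subseteq[d]\setminus\{c\}$ and $H$ a connected component of $G_S$; that is, precisely the elements of $P(G)$ with $S\subseteq[d]\setminus\{c\}$. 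So $P(G_{[d]\setminus\{c\}})$ and $\lk_{P(G)}(c)$ have the same underlying set. Finally, the order on $P(G_{[d]\setminus\{c\}})$ is ``$(H,S)\le(H',S')$ iff $S'\subseteq S$ and $H'$ is a subgraph of $H$'', which is exactly the restriction of the order of $P(G)$; hence the two posets are equal. As a sanity check, both have rank $d-1$ (the link of an atom in a rank-$d$ simplicial poset has rank $d-1$, and $G_{[d]\setminus\{c\}}$ uses $d-1$ colors), and their common minimal element is the pair $\left(G_{[d]\setminus\{c\}}, [d]\setminus\{c\}\right)$, which is $c$ on one side and $\hat{0}$ of $P(G_{[d]\setminus\{c\}})$ on the other.

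I do not expect a serious obstacle here: the entire content is the bookkeeping identity $(G_T)_S = G_S$ for $S\subseteq T$, together with the observation that the ``subgraph'' clause of the order relation becomes vacuous once $S\subseteq [d]\setminus\{c\}$. The one spot that warrants care is to argue at the level of \emph{posets} rather than merely sets — i.e., to verify that the order relations literally agree — and to note the purely cosmetic relabeling of the color set $[d]\setminus\{c\}$ needed so that $P(G_{[d]\setminus\{c\}})$ falls under the given definition.
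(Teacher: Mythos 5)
Your proposal is correct and follows essentially the same route as the paper's proof: identify the atom $c$ with $\bigl(G_{[d]\setminus\{c\}},[d]\setminus\{c\}\bigr)$, observe that elements above it are exactly the pairs $(H,S)$ with $S\subseteq[d]\setminus\{c\}$ and $H$ a component of $G_S=(G_{[d]\setminus\{c\}})_S$, and note the order relations agree by definition. Your version just spells out the bookkeeping (the vacuity of the subgraph clause and the relabeling of colors) more explicitly than the paper does.
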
 

\begin{proof} 
Every element of $\lk_{P(G)}(c)$ is of the form $(H,S) \geq c = (G_{[d] \setminus \{c \}}, [d]\setminus \{c \})$. In other words, $S$ is a subset of $[d] \setminus \{c \}$, and $H$ is a connected component of $G_S = (G_{[d] \setminus \{c \}})_S$. So the elements of $\lk_{P(G)}(c)$  are exactly the elements of $P(G_{[d] \setminus c})$, and they have the same order relation by definition.

\end{proof}

Next we introduce a concept of shellings for posets coming from graphs. We first recall an extension of the concept of shellings for simplicial complexes to regular CW-complexes as in \cite{Bjorner-regular-cw-complexes}:

\begin{definition} \label{def:CW-shellable} Let $\Delta$ be a pure $(d-1)$-dimensional regular CW-complex. For $\sigma$ a cell of $\Delta$, let $\partial \sigma$ denote the subcomplex consisting of all the proper faces of $\sigma$. A linear ordering $\sigma_1, \sigma_2, \ldots, \sigma_r$ of the maximal cells of $\Delta$ is called a \emph{shelling} (and say we say $\Delta$ is \emph{shellable}) if either $d=1$ or $d > 1$ and the following are satisfied:
\begin{enumerate}
\item For $2 \leq j \leq r$, $ (\bigcup_{i=1}^{j-1} \partial \sigma_i) \cap \partial \sigma_j$ is pure of dimension $d-2$,
\item there is a shelling of $\partial \sigma_j$ in which the $(d-2)$-cells of $(\bigcup_{i=1}^{j-1} \partial \sigma_i) \cap \partial \sigma_j$  come first for $2 \leq j \leq r$, and
\item $\partial \sigma_1$ is shellable.
\end{enumerate}

\end{definition}

Note that by \cite{Bjorner-regular-cw-complexes}, the order complex of the face poset of a shellable CW-complex is shellable (in the usual sense) and thus is Cohen-Macaulay. Furthermore, if $\Delta$ is a simplicial cell complex, then (2) and (3) are satisfied automatically, as any ordering of the facets of the boundary of a simplex is a shelling.

\begin{definition} Let $G$ be a connected multigraph with edges labeled in $[d]$. A linear ordering, $F_1, F_2, \ldots, F_r$, of the vertices of $G$ is a \emph{graphical shelling} if  for each $i \geq 1$, the set 
\[ 
\{ S \subseteq [d] : \text{every path from $F_i$ to $F_j$ with $j<i$ uses some edge with color in $S$} \}
\]
has a unique minimal element, which we will call $R(F_i)$. Note that $R(F_1) = \emptyset$.

\end{definition}

\begin{proposition} \label{Prop:GraphicalShellings} Let $G$ be a connected multigraph with edges colored in $[d]$, and $F_1, F_2, \ldots, F_r$ a graphical shelling of $G$.  Then the corresponding ordering $F_1, F_2, \ldots, F_r$ of the facets of $P(G)$ is a shelling. Furthermore, $h_j(P) = | \{ i :  |R(F_i)| = j \}|$. 

\begin{proof} The simplicial cell complex $P(G)$ is pure of dimension $d-1$. As $P(G)$ is simplicial, we need only show property (1) from Definition \ref{def:CW-shellable}.  Suppose $1 < j \leq r$. Let $(F_j)_S$ be a  face of $(\cup_{i=1}^{j-1} \partial \sigma_i) \cap \partial \sigma_j$. Then there is some $i <j$ such that $(F_j)_S = (F_i)_S$, so there is a path in $G$ from $F_j$ to $F_i$ using no color in $S$. Thus $R(F_j) \nsubseteq S$. Pick $c \in R(F_j) \setminus S$. Then $S \subseteq [d] \setminus \{c\}$, so $(F_j)_S$ is a subface of $(F_j)_{[d] \setminus \{c\}}$, which is a $(d-2)$-face of $F_j$. But as $R(F_j) \nsubseteq [d] \setminus \{c\}$, there must be some $l < j$ such that there is a path in $G$ from $F_j$ to $F_l$ using no color in $[d] \setminus \{c\}$, and so $(F_j)_{[d] \setminus \{c\}}$ is in $(\cup_{i=1}^{j-1} \partial \sigma_i) \cap \partial \sigma_j$. Thus $(\cup_{i=1}^{j-1} \partial \sigma_i) \cap \partial \sigma_j$ is pure of dimension $d-2$, and our ordering is a shelling of $P(G)$.

The proof of the second claim is essentially the same as the proof of the corresponding result for shellable simplicial complexes, and follows by straightforward computation from the observation that at each step of the shelling, the faces of $F_j$ that are not in any earlier facet are those of the form $(F_j)_S$ such that $R(F_j) \subseteq S \subseteq [d]$.

\end{proof}
\end{proposition}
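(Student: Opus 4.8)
The plan is to exploit the fact that $P(G)$ is a simplicial cell complex in order to reduce the verification of Definition \ref{def:CW-shellable} to a single purity statement, and then to read that statement — together with the $h$-vector count — directly off the combinatorics of $G$. First I would fix notation: write $\partial F_i$ for the boundary complex of the facet $F_i$ of $P(G)$, and recall from the discussion above that the faces of $F_j$ are exactly the cells $(F_j)_S$ for $S \subseteq [d]$, with $\dim (F_j)_S = |S|-1$, and that $(F_j)_S = (F_i)_S$ precisely when $F_i$ and $F_j$ lie in the same connected component of $G_{[d]\setminus S}$, i.e. when some path from $F_j$ to $F_i$ in $G$ avoids every colour in $S$. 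Since $P(G)$ is a simplicial cell complex it is pure of dimension $d-1$, and — as noted after Definition \ref{def:CW-shellable} — conditions (2) and (3) of that definition hold automatically, because every ordering of the facets of the boundary of a simplex is a shelling. So the only thing to check for the shelling claim is condition (1): for each $2 \le j \le r$, the complex $C_j := \bigl(\bigcup_{i<j}\partial F_i\bigr)\cap \partial F_j$ is pure of dimension $d-2$.

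This is the heart of the argument, and where the graphical-shelling hypothesis enters. Consider $U_j := \{\, S\subseteq[d] : (F_j)_S \notin \partial F_i \text{ for every } i<j \,\}$; by the translation above, $S\in U_j$ iff every path in $G$ from $F_j$ to each $F_i$ with $i<j$ uses an edge whose colour lies in $S$. I would first observe that $U_j$ is an up-set in the Boolean lattice $2^{[d]}$ (enlarging $S$ only makes the condition easier to satisfy), so the hypothesis that it has a \emph{unique} minimal element $R(F_j)$ forces $U_j$ to be the principal up-set $\{\, S : R(F_j)\subseteq S\subseteq[d]\,\}$ (a unique minimal element of a finite up-set is a minimum). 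Dually, the faces of $F_j$ that already appear in some earlier facet are precisely the $(F_j)_S$ with $R(F_j)\not\subseteq S$, and those with $S\subsetneq[d]$ are exactly the faces of $C_j$. Now given any $(F_j)_S\in C_j$, pick $c\in R(F_j)\setminus S$ (possible since $R(F_j)\not\subseteq S$); then $S\subseteq [d]\setminus\{c\}$, so $(F_j)_S$ is a face of the $(d-2)$-cell $(F_j)_{[d]\setminus\{c\}}$, and because $c\in R(F_j)$ we have $R(F_j)\not\subseteq[d]\setminus\{c\}$, so that $(d-2)$-cell is itself in $C_j$. Hence every face of $C_j$ lies in a $(d-2)$-cell of $C_j$, so $C_j$ is pure of dimension $d-2$, completing the proof that $F_1,\dots,F_r$ is a shelling.

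For the formula $h_j(P) = |\{\, i : |R(F_i)| = j \,\}|$, write $a_j$ for the right-hand side. By the description just obtained, the faces of $F_j$ not lying in any earlier facet are the cells $(F_j)_S$ with $R(F_j)\subseteq S\subseteq[d]$, which contribute $\binom{d-|R(F_j)|}{s-|R(F_j)|}$ faces of dimension $s-1$; since in a shelling every face of $P(G)$ is new at exactly one step, summing over the facets gives $f_{s-1}(P) = \sum_j a_j\binom{d-j}{s-j}$. Substituting this into the defining relation \eqref{h-nums-formula} — or, more transparently, into the equivalent identity $\sum_j h_j(P)\,t^j = \sum_s f_{s-1}(P)\,t^s(1-t)^{d-s}$ and using $\sum_s \binom{d-j}{s-j}t^s(1-t)^{d-s} = t^j$ — collapses everything to $h_j(P) = a_j$. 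The only genuinely substantive step is the purity verification in the second paragraph; the reduction in the first paragraph is immediate from $P(G)$ being simplicial, and the $h$-vector computation is the standard shelling bookkeeping transcribed to this setting.
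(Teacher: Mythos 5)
Your proof is correct and follows essentially the same route as the paper: you reduce to condition (1) of Definition \ref{def:CW-shellable}, identify the earlier-appearing faces of $F_j$ as those $(F_j)_S$ with $R(F_j)\nsubseteq S$, and pick $c\in R(F_j)\setminus S$ to exhibit the $(d-2)$-face $(F_j)_{[d]\setminus\{c\}}$ in the intersection, then do the standard shelling count for the $h$-numbers. Your explicit observation that the set of ``blocking'' colour sets is an up-set, so its unique minimal element makes it the principal filter over $R(F_j)$, is a nice clarification of a step the paper uses implicitly, but the argument is the same.
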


We are now ready to define the simplicial posets $X(k,d)$.  The graphs we will use to realize $X(k,d)$ will be defined in terms of binary words; the idea is somewhat reminiscent of the construction of $B(i,d)$ in \cite{MR2854181}.  Since $X(0,d)$ is the disjoint union of two $(d-1)$-simplices, we will only consider the simplicial posets $X(k,d)$ with $0 < k < d$ for the remainder of this paper.

\begin{definition} Suppose $k, d$ are positive integers with $k<d$. Let $W_d$ be the set of words on alphabet $\{0,1\}$ of length $d$ and first letter $1$. For $w = w_1w_2 \ldots w_d$ in $W_d$, we may write $w$ as a concatenation of subwords $w = B_1(w)B_2(w) \ldots B_q(w)$ where $B_i(w)$ is a word of all $1$'s if $i$ is odd and all $0$'s if $i$ is even. We will call these the \emph{blocks} of $w$. For any $1 \leq j \leq d$, let $b_j(w)$ be the index of the block of $w$ containing $w_j$.  Let $W_d(k)$ denote the set of words in $W_d$ with exactly $k+1$ blocks.
\end{definition}
\begin{definition}
For $w = w_1w_2 \ldots w_d \in W_d$ let $REP(w) = \{ i>1: w_i \neq w_{i-1}\}$ be the set of \emph{right endpoints} of $w$, and   $LEP(w) = \{ i<d: w_i \neq w_{i+1}\}$ be the set of \emph{left endpoints} of $w$. 
\end{definition}
The terminology refers to the fact that $LEP$ and $REP$ contain the indices of the first and last elements respectively of the blocks of $w$.  Note that we do not include $1$ as a left endpoint or $d$ as a right endpoint. However we allow $1$ to be a right endpoint (if the first block has size one), and $d$ to  be a left endpoint (if the last block has size one).

\begin{definition} For $k, d$ positive integers, $k<d$, let $G'(k,d)$ be the edge-labeled multigraph on vertex set $W_d(k)$ such that if two words differ only in position $j$ they are connected by an edge labeled $j$.

Let $G(k,d)$ be the graph obtained by adding a new vertex, $\alpha$, to $G'(k,d)$, and connecting a vertex $w=w_1w_2 \ldots w_d$ of $G'(k,d)$  to $\alpha$ by an edge labeled $j$ if $w_j$ is contained in a block of size one in $w$.

\end{definition}

\begin{example}
The vertices of $G'(1,d)$ correspond to binary words whose first letter is 1 that have two blocks.  We label the vertices as 

\begin{eqnarray*}
v_1 &=& 100\cdots 00  \\
v_2 &=& 110\cdots 00 \\
v_3 &=& 111\cdots 00 \\
&\vdots& \\
v_{d-1} &=& 111\cdots 10.
\end{eqnarray*}

Thus the graph $G(1,d)$ is
\begin{center}
\begin{tikzpicture}
\draw (0,0) -- (2,0);
\draw[dotted] (2,0) -- (4,0);
\draw(4,0) -- (5,0);
\draw[fill=black] (0,0) circle (.1);
\draw[fill=black] (1,0) circle (.1);
\draw[fill=black] (2,0) circle (.1);
\draw[fill=black] (3,0) circle (.1);
\draw[fill=black] (4,0) circle (.1);
\draw[fill=black] (5,0) circle (.1);
\draw (0,-.5) node {$v_1$};
\draw (1,-.5) node {$v_2$};
\draw (2,-.5) node {$v_3$};
\draw (4,-.5) node {$v_{d-2}$};
\draw (5,-.5) node {$v_{d-1}$};
\draw (.5,.25) node {\tiny{$2$}};
\draw (1.5,.25) node {\tiny{$3$}};
\draw[fill=black] (2.5,2.5) circle (.1);
\draw (0,0) -- (2.5,2.5) -- (5,0);
\draw (4.5,.25) node {\tiny{$d-1$}};
\draw (2.5,2.8) node {$\alpha$};
\draw (1.25,1.55) node {\tiny$1$};
\draw (3.75,1.55) node {\tiny$d$};
\end{tikzpicture}
\end{center}

The corresponding simplicial poset $P(G(1,d))$ has $d$ facets, $F_0,F_1, F_2, \ldots, F_{d-1}$, where $F_i$ intersects $F_{i+1}$ along the facet opposite vertex $i+1$ for all $1 \leq i \leq d$ (where addition is taken modulo $d$).  In this picture, $F_0$ corresponds to vertex $\alpha$, and $F_j$ corresponds to $v_j$ for all other $j$.  This matches the complex $X(1,d)$ constructed by Novik and Swartz \cite[Lemma 7.6]{Novik-Swartz}.  

For concreteness, the geometric realization of $P(G(1,3))$ is shown below with its underlying graph shown in gray. 

\begin{center}
\begin{tikzpicture}[>=triangle 45]
\draw[fill=gray, color=gray] (2,1.25) circle (.075);
\draw[color=gray] (2,1.5) node {\tiny$\alpha$};
\draw[fill=gray, color=gray] (1,.5) circle (.075);
\draw[color=gray] (1,.25) node {\tiny$v_1$};
\draw[fill=gray, color=gray] (3,.5) circle (.075);
\draw[color=gray] (3,.25) node {\tiny$v_2$};
\draw[color=gray] (0,1) -- (1,.5) -- (2,1.25) -- (3,.5) -- (4,1);
\draw[color=gray, fill=gray] (0,1) circle (.075);
\draw[color=gray, fill=gray] (4,1) circle (.075);
\draw[color=gray] (0,1.25) node {\tiny$v_2$};
\draw[color=gray] (4,1.25) node {\tiny$v_1$};

\draw (0,0) -- (2,0) -- (1,1.73);
\draw (1,1.73) -- (3,1.73) -- (2,0);
\draw (2,0) -- (4,0);
\draw[->] (0,0) -- (.5,.866);
\draw (.5,.866) -- (1,1.73);
\draw [->](3,1.73) -- (3.5,1.73-.866);
\draw (3.5,1.73-.866) -- (4,0);
\draw[fill=black] (0,0) circle (.1);
\draw[fill=black] (2,0) circle (.1);
\draw[fill=black] (4,0) circle (.1);
\draw[fill=black] (1,1.73) circle (.1);
\draw[fill=black] (3,1.73) circle (.1);
\draw (0,-.5) node {$1$};
\draw (2,-.5) node {$2$};
\draw (1,2.05) node {$3$};
\draw (4,-.5) node {$3$};
\draw (3,2.05) node {$1$};
\end{tikzpicture}
\end{center}
The simplicial cell complex $X(1,d)$ is either a trivial disc bundle (when $d$ is even) or a twisted disc bundle (when $d$ is odd) over $\mathbb{S}^1$.  In general, the complexes $X(k,d)$ are not pseudomanifolds, which we discuss further in Section \ref{section:manifolds}. 
\end{example}

\begin{example}
We will also illustrate $G(2,5)$.  The vertices of $G'(2,5)$ correspond to binary words whose first letter is 1 that have three blocks.  The vertices in this case are
\begin{align*}
v_{12} = & 10111 & v_{23}= & 11011 \\
v_{13} = & 10011 & v_{24}= & 11001 \\
v_{14} = & 10001 & v_{34}= & 11101.  
\end{align*}
The graph $G(2,5)$ is shown below.  We have drawn the vertex $\alpha$ three times to make the drawing planar, and we have made this vertex white in order to distinguish it from the vertices of $G'(2,5)$.
\begin{center}
\begin{tikzpicture}[scale=1.25]
\draw (0,0) -- (2,0);
\draw (0,0) -- (0,2);
\draw (1,0) -- (1,1) -- (0,1);
\draw[fill=black] (0,0) circle (.1);
\draw[fill=black] (0,1) circle (.1);
\draw[fill=black] (0,2) circle (.1);
\draw[fill=black] (1,1) circle (.1);
\draw[fill=black] (1,0) circle (.1);
\draw[fill=black] (2,0) circle (.1);
\draw (0,2.25) node {\tiny$v_{12}$};
\draw (.75,.75) node {\tiny$v_{23}$};
\draw (-.25,.75) node {\tiny$v_{13}$};
\draw (-.25,-.25) node {\tiny$v_{14}$};
\draw (.75,-.25) node {\tiny$v_{24}$};
\draw (2.25,-.25) node {\tiny$v_{34}$};
\draw (.1,1.5) node {\tiny$3$};
\draw (.1,.5) node {\tiny$4$};
\draw (1.1,.5) node {\tiny$4$};
\draw (.5,.1) node {\tiny$2$};
\draw (.5,1.1) node {\tiny$2$};
\draw (1.5,.1) node {\tiny$3$};
\draw (0,2) -- (2,2) -- (2,0);
\draw (1,1) -- (2,2);
\draw (2.25,2) node {\tiny$\alpha$};
\draw (1,2.1) node {\tiny$2$};
\draw (1.25,1.5) node {\tiny$3$};
\draw (2.1,1) node {\tiny$4$};
\draw(.75,-1.5) node {\tiny$\alpha$};
\draw (0,0) -- (1,-1.5) -- (2,0);
\draw (1,0) -- (1,-1.5);
\draw (.5,-1) node {\tiny$5$};
\draw (1.5,-1) node {\tiny$5$};
\draw (1.1,-.5) node {\tiny$5$};
\draw (-1.5,.75) node {\tiny$\alpha$};
\draw (0,0) -- (-1.5,1) -- (0,2);
\draw (-1.5,1) -- (0,1);
\draw (-1,.5) node {\tiny$1$};
\draw (-1,1.5) node {\tiny$1$};
\draw (-.5,1.1) node {\tiny$1$};
\draw[fill=white] (2,2) circle (.1);
\draw[fill=white] (1,-1.5) circle (.1); 
\draw[fill=white] (-1.5,1) circle (.1);
\end{tikzpicture}
\end{center}
\end{example}

Traversing an edge labeled $j$ in $G'(k,d)$ from $w$ to $w'$ corresponds to changing the value of $w_j$ from $1$ to $0$ or $0$ to $1$; we will refer to such a move as a `flip' in position $j$. A path in $G'(k,d)$ from $w$ to $w'$ corresponds to performing a series of successive flips that take $w$ to $w'$, with each word along the way being an element of $W_d(k)$. A path from $w$ to $\alpha$ in $G(k,d)$ corresponds to performing a series of flips such that the last flip reduces the number of blocks.

\begin{lemma} \label{Lemma:blockchange} Any path in $G(k,d)$ from $w$ to $w'$ with $b_c(w) \neq b_c(w')$ must either traverse an edge colored $c$ or visit $\alpha$.

\begin{proof}
 Clearly if $u, v \in W_d(k)$ are connected by an edge labeled $i$ in $G(k,d)$, then either $i = c$ or $b_c(w) = b_c(w')$. 
\end{proof}
\end{lemma}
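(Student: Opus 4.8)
The plan is to prove Lemma \ref{Lemma:blockchange} by showing that a single edge of $G(k,d)$ never changes the \emph{block index} of a fixed position $c$ — the statement then follows by concatenating along the path. So first I would observe that the edges of $G(k,d)$ come in two types: edges of $G'(k,d)$, and edges incident to the new vertex $\alpha$. An edge incident to $\alpha$ is allowed by the lemma (``visit $\alpha$''), so it suffices to analyze edges of $G'(k,d)$ joining two words $u,v \in W_d(k)$.

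Next I would recall that an edge labeled $i$ of $G'(k,d)$ connects words $u$ and $v$ that differ only in position $i$, i.e. $v$ is obtained from $u$ by a ``flip'' at position $i$. The key point is that such a flip is only a legal edge of $G'(k,d)$ when both endpoints lie in $W_d(k)$, i.e. both have exactly $k+1$ blocks. I would argue that if $i \neq c$ then $b_c(u) = b_c(v)$: changing $u_i$ does not alter any letter in position $c$, and — crucially — since the block count is preserved (both words have $k+1$ blocks), the block structure to the left of position $c$ is unchanged in the relevant sense. Actually the cleanest way to see this: the block index $b_c(w)$ equals one plus the number of ``descents/ascents'' (indices $j \le c$ with $w_{j-1}\neq w_j$, reading with the convention that the word starts at position $1$); equivalently $b_c(w) = 1 + |REP(w) \cap \{2,\dots,c\}|$. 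A flip at position $i > c$ cannot affect $REP(w)\cap\{2,\dots,c\}$ at all. A flip at position $i < c$ could in principle change membership of $i$ or $i+1$ in $REP$, but since the total number of blocks is fixed at $k+1$ for both endpoints, one checks that any such local change to the left of $c$ must be compensated, so $|REP(w)\cap\{2,\dots,c\}|$ is unchanged. Hence $b_c(u)=b_c(v)$ whenever the edge color $i$ is not $c$.

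Finally, I would assemble the path argument: given a path $F = w^{(0)}, w^{(1)}, \dots, w^{(m)} = w'$ in $G(k,d)$, if the path never visits $\alpha$ then every edge is an edge of $G'(k,d)$, so by the previous paragraph $b_c(w^{(t-1)}) = b_c(w^{(t)})$ for every step whose edge color is not $c$; therefore if $b_c(w)\neq b_c(w')$ some step must have edge color $c$. This is exactly the claim. The main obstacle — and the only place requiring real care rather than bookkeeping — is the case of a flip at a position $i < c$ in $G'(k,d)$: one must use the hypothesis that \emph{both} endpoints have exactly $k+1$ blocks to rule out a ``local'' change to the block decomposition left of $c$ that is not compensated elsewhere. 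As the author's one-line proof indicates, this is in fact immediate once stated correctly (``if $u,v\in W_d(k)$ are joined by an edge labeled $i$, then either $i=c$ or $b_c(u)=b_c(v)$''), so I would aim to make that observation precise and then invoke it along the path.
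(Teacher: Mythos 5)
Your proposal is correct and follows essentially the same route as the paper: the paper's one-line proof is precisely the observation that an edge of $G'(k,d)$ labeled $i\neq c$ joins two words of $W_d(k)$ with the same value of $b_c$, and the path concatenation is left implicit. Your added detail (writing $b_c(w)=1+|REP(w)\cap\{2,\dots,c\}|$ and noting that a flip at $i<c$ exchanges membership of $i$ and $i+1$ in $REP$, both of which lie in $\{2,\dots,c\}$, because the block count is fixed at $k+1$) is a correct and complete justification of that observation.
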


\begin{lemma} For positive integers $k < d$, $G(k,d)$ has $\binom{d-1}{k} + 1$ vertices. Furthermore, $G(k,d)_{[d] \setminus \{c \}}$ is connected for each $c \in [d]$.

\begin{proof} Clearly the elements of $W_d(k)$ are in bijection with the set of compositions of $d$ into $k+1$ parts, of which there are $\binom{d-1}{k}$, and the first statement follows.

To prove the second claim, let $w \in W_d(k)$ and pick any block $B_i(w) = w_r \ldots w_q$ not containing $w_c$ (there are at least two blocks so this is always possible). Then the flips in positions $r, r+1, \ldots, q$ (if we have chosen the first block, reverse the order) give a path in $G(k,d)_{[d] \setminus \{c\}}$ from $w$ to $\alpha$. Thus every vertex of $G(k, d)_{[d] \setminus \{c\}}$ is joined by a path to $\alpha$, so the graph is connected.

\end{proof}

\end{lemma}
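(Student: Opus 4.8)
I would treat the two assertions separately; the first is a routine enumeration and the second a path-construction argument. For the vertex count, observe that a word $w \in W_d$ is completely determined by the lengths of its blocks: since $w_1 = 1$ and the blocks alternate between maximal runs of $1$'s and maximal runs of $0$'s, knowing the number of blocks $q$ together with the lengths $\ell_1, \dots, \ell_q$ recovers $w$, and conversely every such sequence of positive integers summing to $d$ arises. Hence $W_d(k)$ is in bijection with the set of compositions $(\ell_1, \dots, \ell_{k+1})$ of $d$ into $k+1$ positive parts, of which there are $\binom{d-1}{k}$. Since $G(k,d)$ is obtained from the graph on vertex set $W_d(k)$ by adjoining the single new vertex $\alpha$, it has $\binom{d-1}{k} + 1$ vertices.

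For connectivity of $G(k,d)_{[d]\setminus\{c\}}$, it suffices to show that each $w \in W_d(k)$ is joined to $\alpha$ by a path in this graph. Because $w$ has $k+1 \ge 2$ blocks (this is where $k \ge 1$ enters), some block $B_j(w) = w_r w_{r+1} \cdots w_q$ does not contain the position $c$. If $q = r$, then $w_r$ already lies in a block of size one, so $w$ is adjacent to $\alpha$ via the edge labeled $r \ne c$. Otherwise I would flip the positions of $B_j$ one at a time, always merging the current boundary position of the shrinking block into its neighbor of opposite parity: if $B_j$ is not the first block I flip $w_r, w_{r+1}, \dots, w_{q-1}$ (absorbing them into the left neighbor), and if $B_j$ is the first block I flip $w_q, w_{q-1}, \dots, w_2$ (absorbing them into the right neighbor, which exists since there are at least two blocks). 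After these flips, the unique unflipped position of $B_j$ forms a block of size one, so the resulting word is adjacent to $\alpha$ via the edge labeled by that position. Every color used along the way — the flipped positions and the final one — lies inside $B_j$, hence is different from $c$, so the whole path lies in $G(k,d)_{[d]\setminus\{c\}}$, and $w$ is connected to $\alpha$ there.

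The enumeration is routine; the care in the second part goes into checking that flipping a boundary position of a block, rather than an interior one, is exactly what keeps the total number of blocks equal to $k+1$, so that each intermediate word is genuinely a vertex of $G'(k,d)$; into the small case distinctions for when $B_j$ is a singleton or is the first or last block (so one of its neighbors is missing); and into verifying that the final edge to $\alpha$ uses a label inside $B_j$ and hence $\ne c$. I expect this bookkeeping to be the main — and only — obstacle; once the flip order is fixed, each of these amounts to a one-line check on block lengths.
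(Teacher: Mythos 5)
Your proposal is correct and follows essentially the same route as the paper: the same bijection with compositions of $d$ into $k+1$ parts for the vertex count, and the same path to $\alpha$ obtained by flipping, in order, the positions of a block avoiding $c$ (reversing the order when that block is the first one). You simply spell out the bookkeeping (singleton blocks, which neighbor absorbs the flips, why intermediate words stay in $W_d(k)$) that the paper leaves implicit.
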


\begin{theorem} \label{Theorem:Mainshelling} Fix positive integers $k<d$, and $G = G(k,d)$.Then for each $c \in [d]$, $G_{[d] \setminus \{c \}}$ admits a graphical shelling, $\alpha, w^1, w^2, \ldots w^{\binom{d-1}{k}}$, with $|R(w^j)| = k$ for each $j$.

\end{theorem}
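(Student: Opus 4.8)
The plan is to exhibit, for each $c$, an explicit order on the vertices of $G := G_{[d]\setminus c}$ together with an explicit candidate for the restriction set of each vertex, and then verify the defining property of a graphical shelling. For $w\in W_d(k)$ write $LEP(w)=\{\ell_1<\cdots<\ell_k\}$, put $\ell_0=0$, and let $m_0=m_0(w):=b_c(w)$, so that $\ell_{m_0-1}\le c-1$ and $\ell_{m_0}\ge c$; call $\ell_1,\dots,\ell_{m_0-1}$ the \emph{left} endpoints (they lie in $\{1,\dots,c-1\}$) and $\ell_{m_0},\dots,\ell_k$ the \emph{right} endpoints (they lie in $\{c,\dots,d-1\}$). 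View a word of $\mathcal{W}_{m_0}:=\{w:b_c(w)=m_0\}$ as a pair of token configurations: an $(m_0-1)$-subset of $\{1,\dots,c-1\}$ and a $(k-m_0+1)$-subset of $\{c,\dots,d-1\}$. Then every edge of $G$ not meeting $\alpha$ slides one token by one step within its range, without collision — the edge labelled $j<c$ slides a left token between cells $j-1$ and $j$, the edge labelled $j>c$ slides a right token between cells $j-1$ and $j$ — and the potential $\phi(w):=\sum_{m<m_0}\ell_m-\sum_{m\ge m_0}\ell_m$ changes by exactly $\pm1$ along each such edge; each edge from a word to $\alpha$ is labelled by the position of a singleton block of that word. (That these edges keep $b_c$ fixed is consistent with Lemma~\ref{Lemma:blockchange}.)

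I will order the vertices as: $\alpha$ first; then the words, grouped by $b_c$ in the order $\mathcal{W}_1,\dots,\mathcal{W}_{k+1}$; and within each $\mathcal{W}_{m_0}$ in order of increasing $\phi$, ties broken arbitrarily (two words of equal $\phi$ are never adjacent in $G$, so the tie-breaking is irrelevant). The candidate restriction set is
\[
R(w)\ :=\ \{\,\ell_m : 1\le m<m_0\,\}\ \cup\ \{\,\ell_m+1 : m_0\le m\le k\,\}.
\]
Because the $\ell_m$ with $m<m_0$ lie in $\{1,\dots,c-1\}$ and the $\ell_m+1$ with $m\ge m_0$ lie in $\{c+1,\dots,d\}$, this is a $k$-element subset of $[d]\setminus\{c\}$; in fact $w\mapsto R(w)$ is a bijection onto the $k$-subsets of $[d]\setminus\{c\}$ (recover $m_0$ from $S$ as $1+|\{s\in S:s<c\}|$), so once $R(w)$ is shown to be the restriction set, the conclusion $|R(w^j)|=k$ is automatic.

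The heart of the proof is the separation claim: after deleting from $G$ every edge with label in $R(w)$, the component $C_w$ of $w$ contains no predecessor of $w$. Since the colours of $R(w)$ are exactly the labels of the moves that push a left token of $w$ one cell \emph{left} or a right token of $w$ one cell \emph{right}, one shows by induction along a path out of $w$ in the deleted graph that each left token stays weakly above its position in $w$ and cannot reach the cell occupied by the next left endpoint of $w$ (nor cell $c$), while symmetrically each right token stays weakly below its position in $w$ and above its left neighbour's cell (or cell $c$): the label needed to cross any of these cells is one of $\ell_m$, $\ell_{m+1}$, $\ell_m+1$ or $c$, all removed. Hence every word of $C_w$ lies in $\mathcal{W}_{m_0}$ and has $\phi\ge\phi(w)$, with equality only for $w$; and no move to $\alpha$ is available, because a singleton block of a reachable word, trapped in those disjoint confining intervals, can only sit at a cell of $R(w)$ or at cell $c$, so its edge to $\alpha$ is gone. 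By Lemma~\ref{Lemma:blockchange} the other classes $\mathcal{W}_{m'}$ are reachable from $w$ only through $\alpha$, so $C_w$ misses them too; thus $C_w$ avoids $\alpha$, all of $\mathcal{W}_1,\dots,\mathcal{W}_{m_0-1}$, and every $\phi$-earlier word of $\mathcal{W}_{m_0}$, i.e.\ all predecessors. Conversely, for each $r\in R(w)$ there is a single edge of $G$ at $w$ labelled $r$ leading either to $\alpha$ or to a word of $\mathcal{W}_{m_0}$ with $\phi=\phi(w)-1$ (shrink block $m$ on the right when $r=\ell_m$, $m<m_0$; enlarge block $m$ on the right when $r=\ell_m+1$, $m\ge m_0$; fall back to the edge to $\alpha$ if the relevant block has size one), whose other endpoint is a predecessor; so no colour set avoiding $r$ separates $w$ from its predecessors. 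Together these say that the colour sets separating $w$ from its predecessors form the principal up-set $\{S:S\supseteq R(w)\}$, hence $R(w)$ is its unique minimal element. As $G$ is connected (shown above), Proposition~\ref{Prop:GraphicalShellings} then gives that the listed order is a graphical shelling with $R(\alpha)=\emptyset$ and $|R(w^j)|=k$ for all $j$.

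The main obstacle is precisely the inductive confinement argument inside the separation claim: after a token has moved off its cell in $w$ it may legitimately use labels outside $R(w)$, so one must argue carefully that it still cannot cross back past that cell, cannot pass the cell of its neighbouring endpoint, and that no singleton block can be formed at a surviving cell. Everything else — bijectivity of $w\mapsto R(w)$, the translation of edges into token slides, the $\phi$-bookkeeping, and the minimality step — is routine.
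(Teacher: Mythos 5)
Your proposal is correct, and it ends up with exactly the same combinatorial data as the paper's proof: $\alpha$ first, words grouped by $b_c(w)$, and the same restriction sets --- your $R(w)=\{\ell_m : m<m_0\}\cup\{\ell_m+1 : m\ge m_0\}$ is precisely the paper's $I(w)\cup E(w)$ (block boundaries to the left, respectively to the right, of $c$, reading $REP$/$LEP$ as the paper's prose intends, i.e.\ last/first elements of blocks). The difference is in how the shelling condition is verified. Within each $b_c$-class the paper orders words lexicographically on $I(w)$ and reverse-lexicographically on $E(w)$, notes that flipping any single colour of $R(w)$ moves strictly earlier (or to $\alpha$), and rules out $R(w)$-avoiding paths to earlier vertices by a case analysis on the first differing entry of $I$ or $E$ via Lemma \ref{Lemma:blockchange}, handling paths to $\alpha$ by observing that the last flipped block must be absorbed entirely, so one of its boundary colours (which lies in $R(w)$) is used. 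You replace the lexicographic refinement by the single potential $\phi$ and prove the separation claim by token confinement: deleting the colours $R(w)\cup\{c\}$ pins each block boundary between its original position and its neighbour's original position (or $c$), so every reachable word has strictly larger $\phi$, and every singleton block of a reachable word is forced to sit at a cell of $R(w)\cup\{c\}$, killing all edges to $\alpha$. I checked the confinement step you flag as the main obstacle: since the confining intervals are pairwise disjoint, an adjacent pair of boundaries (or a boundary at $1$ or $d-1$) can only occur at a cell of $R(w)\cup\{c\}$, so the sketch does close. Your route buys a uniform treatment of $\alpha$ and of same-class predecessors, and makes the tie-breaking manifestly irrelevant; the paper's order makes the ``one flip goes earlier'' step immediate and delegates the path analysis to Lemma \ref{Lemma:blockchange}. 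One small correction: Proposition \ref{Prop:GraphicalShellings} does not certify that your order is a graphical shelling --- you establish that directly by identifying the separating colour sets as the up-set of $R(w)$; the proposition is only needed afterwards, to convert the graphical shelling into a shelling of the link and to read off its $h$-vector.
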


\begin{corollary} For $k <d$ positive integers, Theorem \ref{Theorem:main} is satisfied by letting $X(k,d) = P(G(k,d))$.

\begin{proof} By Theorem \ref{Theorem:Mainshelling}, Proposition \ref{Prop:GraphicalShellings}, and Lemma \ref{lemma:graphlinks} we see that the link of each vertex of $P(G(k,d))$ is shellable and has $h$-vector $\left(1, 0, \ldots 0, \binom{d-1}{k}, 0 \ldots, 0\right)$, where the $\binom{d-1}{k}$ entry is in the $k^{th}$ position. Thus, by  Lemma \ref{Lemma:CMlinks}, $P(G(k,d))$ is Buchsbaum, and by Lemmas \ref{Lemma:Linkhnumbers} and \ref{lemma:sufficiencies} we see that $P(G(k,d))$ has the desired $h'$-numbers and Betti numbers.

\end{proof}

\end{corollary}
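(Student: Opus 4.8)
The plan is to assemble the corollary from the structural results already in place, treating Theorem~\ref{Theorem:Mainshelling} as a black box; the real content has been offloaded to that shelling theorem, so what remains is to thread the link data through Lemmas~\ref{Lemma:Linkhnumbers} and~\ref{lemma:sufficiencies}. Write $G = G(k,d)$ and $P = P(G)$. I would verify the three conclusions of Theorem~\ref{Theorem:main} in the order (2), (3), (1), since the shelling yields (2) and (3) directly at the level of atom links, and these in turn feed the global computation giving (1).

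First I would record the two consequences of the connectivity statement proved in the lemma immediately preceding Theorem~\ref{Theorem:Mainshelling}, namely that $G_{[d]\setminus\{c\}}$ is connected for every $c \in [d]$. On the one hand, connectivity means (by the remark before Lemma~\ref{lemma:graphlinks}) that $P$ has exactly one atom of each color, so $f_0(P) = d$, matching the hypothesis of Lemma~\ref{Lemma:Linkhnumbers}. On the other hand, Lemma~\ref{lemma:graphlinks} identifies $\lk_P(c) = P(G_{[d]\setminus\{c\}})$ for each color $c$; this is a simplicial poset of rank $d-1$ arising from a graph colored by the $d-1$ colors in $[d]\setminus\{c\}$.

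Next I would feed the shelling supplied by Theorem~\ref{Theorem:Mainshelling} into Proposition~\ref{Prop:GraphicalShellings}. For each $c$, the theorem produces a graphical shelling $\alpha, w^1, \ldots, w^{\binom{d-1}{k}}$ of $G_{[d]\setminus\{c\}}$ with $|R(\alpha)| = 0$ and $|R(w^j)| = k$ for all $j$. Proposition~\ref{Prop:GraphicalShellings} then tells me the induced ordering of the facets of $\lk_P(c)$ is a shelling, establishing conclusion (2), and it reads off the link $h$-vector by counting restriction sets of each cardinality:
\[
h_j(\lk_P(c)) \;=\; |\{ i : |R(F_i)| = j \}| \;=\; \begin{cases} 1, & j = 0, \\ \binom{d-1}{k}, & j = k, \\ 0, & \text{otherwise.} \end{cases}
\]
Since $\binom{d-1}{0} = 1$, this is exactly conclusion (3). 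Because a shellable simplicial cell complex has Cohen-Macaulay order complex (the remark after Definition~\ref{def:CW-shellable}), each atom link is Cohen-Macaulay, so Lemma~\ref{Lemma:CMlinks} gives that $P$ is Buchsbaum.

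Finally, with the link $h$-vectors of conclusion (3) and $f_0(P) = d$ in hand, Lemma~\ref{Lemma:Linkhnumbers} computes $h_i(P) = 0$ for $0 < i \leq k$ and $h_i(P) = (-1)^{i-k+1}\binom{d}{i}$ for $k < i \leq d$. As $P$ is Buchsbaum with precisely this $h$-vector, Lemma~\ref{lemma:sufficiencies} delivers the prescribed Betti numbers and $h'$-numbers, completing conclusion (1). The argument is essentially bookkeeping; the only step warranting care is the index-matching across the link/global transition --- confirming that the single size-$0$ restriction set and the $\binom{d-1}{k}$ size-$k$ restriction sets populate degrees $0$ and $k$ of the link $h$-vector exactly as Lemma~\ref{Lemma:Linkhnumbers} requires, and that the rank drops correctly from $d$ to $d-1$ on passing to a link. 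With Theorem~\ref{Theorem:Mainshelling} granted, no substantive obstacle remains.
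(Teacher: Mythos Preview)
Your proposal is correct and follows essentially the same route as the paper's proof, simply spelling out in more detail the chain Theorem~\ref{Theorem:Mainshelling} $\to$ Proposition~\ref{Prop:GraphicalShellings} $\to$ Lemma~\ref{lemma:graphlinks} $\to$ Lemma~\ref{Lemma:CMlinks} $\to$ Lemmas~\ref{Lemma:Linkhnumbers} and~\ref{lemma:sufficiencies}. Your explicit verification that $f_0(P)=d$ (needed for Lemma~\ref{Lemma:Linkhnumbers}) and that shellability implies Cohen--Macaulayness (needed for Lemma~\ref{Lemma:CMlinks}) are steps the paper leaves implicit but which you are right to make explicit.
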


\begin{proof}[Proof of Theorem \ref{Theorem:Mainshelling}] Fix $c \in [d]$. We define a total order on the vertices of $G_{[d] \setminus \{c \}}$. First, set $\alpha < w$ for all $w \in W_d(k)$. Next, for $w \in W_d(k)$, let $I(w) = REP(w) \cap \{1, 2, \ldots c-1\}$ and $E(w) = LEP(w) \cap \{c+1, \ldots d\}$. Suppose $w, w' \in W_d(k)$, with $I(w) = \{a_1 < a_2 < \ldots < a_r \}$, $E(w) = \{ c_1 < c_2 < \ldots < c_s\}$, $I(w') = \{a'_1 < a'_2 < \ldots < a'_{r'} \}$, and $E(w') = \{ c'_1 < c'_2 < \ldots < c'_{s'}\}$. Say $w < w'$ if either
\begin{enumerate}
\item $|I(w)| < |I(w')|$,
\item $|I(w)| = |I(w')|$, and there is an index $j$ such that $a_i = a_i'$ for $i <j$, and $a_j < a_j'$, or
\item $I(w) = I(w')$ and there is an index $j$ such that $c_i = c_i$ for $i >j$ and $c_j > c_j'$.
\end{enumerate}
(Note that this is the lexicographic order on $I(w)$ if we reorder the indices $1, 2, \ldots c, d, d-1, \ldots c+1$. We adopt this more explicit definition to clarify the argument that follows.)

Notice that for $b_c(w) = j$, there are $j$ blocks in the word $w_1w_2 \ldots w_c$, and so $|I(w)| = j-1$, while there are $k+1-j+1$ blocks in $w_cw_{c+1} \ldots w_d$, so $|E(w)| = k-j+1$. In particular, $|I(w) \cup E(w)| = k$.

We claim that the ordering $\alpha < w^1 < w^2 < \cdots < w^{d-1 \choose k}$ is a shelling of $G_{[d] \setminus \{c \}}$ with $R(w^j) = I(w) \cup E(w)$ for all $j \geq 1$.

Suppose $S \subseteq [d]$ and  $I(w) \cup E(w) \nsubseteq S$. We must show that there is a path from $w$ to an earlier vertex that uses no color in $S$. Choose $i \in I(w) \cup E(w) \setminus S$. If $w_i$ is the only element in its block, then there is an edge from $w$ to $\alpha$ colored $i$, and we are done. Otherwise, there is an edge colored $i$ from $w$ to $w' \in W_k(d)$. If $i \in I(w) = \{a_1 < a_2 < \cdots < i < \cdots a_r \}$, then $I(w') =  \{a_1 < a_2 < \cdots < i-1 < \cdots a_r \}$, and $w' < w$. Similarly, if $i \in E(w) = \{c_1 < c_2 < \cdots < i < \cdots c_s \}$, then $E(w') =  \{c_1 < c_2 < \cdots < i+1 < \cdots c_r \}$, and again $w' < w$.

It remains to show that for $u < w$, every path from $w$ to $u$ uses some edge in $I(w) \cup E(w)$. First, suppose we have a path from $w$ to $\alpha$. Let $i$ be the color of the final edge in the path. In particular, our path must flip every element of the block of $w$ containing $w_i$. Thus $w_c$ is not in this block, so if it is the first block, the right-endpoint of the block is in $I(w)$, and if it is the last block the left-endpoint must be in $E(w)$. In the case that $w_j$ is in neither the first nor last block of $w$, then the path must flip both the left- and right-endpoints of the block. In any case our path traverses an edge with color in $I(w) \cup E(w)$.

Now suppose we have a path from $w$ to $w' < w$, $w' \in W_d(k)$ (and assume $w'$ is the first vertex less than $w$ reached by the path, so in particular the path does not visit $\alpha$). Again let $I(w) = \{a_1 < a_2 < \ldots a_r \}$, $E(w) = \{ c_1 < c_2 < \ldots c_s\}$, $I(w') = \{a'_1 < a'_2 < \ldots a'_{r'} \}$, and $E(w') = \{ c'_1 < c'_2 < \ldots c'_{s}\}$. We have three cases:

(1) Suppose $|I(w)| < |I(w')|$. Then $b_c(w) \neq b_c(w')$, and thus as our path cannot use color $c$, by Lemma \ref{Lemma:blockchange} it must pass through $\alpha$, a contradiction.

(2) Suppose $|I(w)| = |I(w')|$, and there is an index $j$ such that $a_i = a_i'$ for $i <j$, and $a_j > a_j'$. Then $b_{a_j}(w) < b_{a_j}(w')$, so by Lemma \ref{Lemma:blockchange}, the path must use an edge of color $a_j \in I(w)$.

(3) Finally, suppose  $I(w) = I(w')$ and there is an index $j$ such that $c_i = c_i$ for $i >j$ and $c_j < c_j'$. Then $b_{c_j}(w) > b_{c_j}(w)$, and our path must use an edge of color $c_j \in E(w)$.
\end{proof}

\section{Manifolds with Boundary} \label{section:manifolds}
To conclude, we consider the case of simplicial posets whose geometric realizations are manifolds with boundary (the face vectors of simplicial posets realized as odd-dimensional manifolds without boundary were characterized by Murai in \cite{Murai-simplicial-cell}). In particular we note that the conditions of Theorem \ref{Theorem:Goal} are insufficient to give a characterization in this case. 

In particular, note that the simplicial posets $X(k,d)$ constructed for Theorem \ref{Theorem:main} are not in general manifolds, or even homology manifolds or pseudomanifolds (with boundary), outside the cases $k=1, d-1$. In fact it is easy to check that these simplicial posets contain faces of codimension one that are contained in more than two facets, which is of course impossible in a (homology/pseudo-) manifold with boundary. It turns out that when $\frac{d}{2} \leq k < d-1$, this is impossible to avoid. Before proving this we recall two standard binomial coefficient identities.

\begin{lemma} \label{lemma:binomial-coeffs}
 Let $0 \leq k \leq d-1$. Then
\begin{align*}
 \sum_{j=k+1}^d (-1)^{j-k+1}\binom{d}{j} & = \binom{d-1}{k}.
 \end{align*}
 For $0 \leq k < d-1$,
 \begin{align*}
 \sum_{j=k+1}^{d-1} (-1)^{j-k+1}(d-j)\binom{d}{j} & = d\binom{d-2}{k}.
\end{align*}
%
%
%

\end{lemma}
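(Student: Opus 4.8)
Both identities are standard and can be established either by generating-function manipulations or by elementary induction/telescoping; I will describe the route I find cleanest. For the first identity, the plan is to recognize $\binom{d-1}{k}$ as a partial alternating sum of a row of Pascal's triangle. Concretely, I would use the identity $\binom{d}{j} = \binom{d-1}{j} + \binom{d-1}{j-1}$ to telescope: writing $(-1)^{j-k+1}\binom{d}{j} = (-1)^{j-k+1}\binom{d-1}{j} + (-1)^{j-k+1}\binom{d-1}{j-1}$ and reindexing the second sum, the terms cancel in pairs and one is left with a single surviving boundary term. One must track the sign carefully: the sum runs from $j=k+1$ to $j=d$, and after telescoping the surviving term is $(-1)^{(k+1)-k+1}\binom{d-1}{k} = \binom{d-1}{k}$ (the top term $\binom{d-1}{d}=0$ drops out). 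Alternatively, since $\sum_{j=0}^{d}(-1)^j\binom{d}{j}=0$, one can write the tail sum $\sum_{j=k+1}^{d}(-1)^j\binom{d}{j} = -\sum_{j=0}^{k}(-1)^j\binom{d}{j}$ and then use the well-known closed form $\sum_{j=0}^{k}(-1)^j\binom{d}{j} = (-1)^k\binom{d-1}{k}$ (itself proved by the same telescoping), which after adjusting the sign $(-1)^{-k+1}$ outside gives the result.

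For the second identity, my plan is to reduce it to the first one plus one more application of a binomial identity. The factor $d-j$ is the obstacle, so I would rewrite it: using $(d-j)\binom{d}{j} = d\binom{d-1}{j}$ (which follows from $j\binom{d}{j}=d\binom{d-1}{j-1}$ applied to $d-j$, or directly from the factorial formula), the left-hand side becomes $d\sum_{j=k+1}^{d-1}(-1)^{j-k+1}\binom{d-1}{j}$. Now I can extend the sum to $j=d$ for free since $\binom{d-1}{d}=0$, and apply the first identity of the lemma with $d$ replaced by $d-1$: $\sum_{j=k+1}^{d-1}(-1)^{j-k+1}\binom{d-1}{j} = \binom{d-2}{k}$. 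Multiplying by $d$ yields $d\binom{d-2}{k}$, as claimed. The hypothesis $k < d-1$ is exactly what is needed so that the range $k+1 \le j \le d-1$ is nonempty and $d-1 \ge k+1$, i.e. so that the first identity applies with parameter $d-1$.

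The only real care-point — and the step I would flag as the most error-prone rather than conceptually hard — is bookkeeping of the alternating signs and the summation endpoints, since the exponent $j-k+1$ shifts under the reindexing $j \mapsto j-1$ in the telescoping step and under the substitution $d \mapsto d-1$ in the first identity. I would double-check the base cases (e.g. $k = d-1$ for the first identity, where the sum is the single term $(-1)^{2}\binom{d}{d}=1=\binom{d-1}{d-1}$, and $k=d-2$ for the second) to make sure no off-by-one sign error has crept in. No deeper machinery is required; everything here is Pascal's rule plus reindexing.
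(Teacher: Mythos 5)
Your proof is correct. The paper states these two identities as standard facts and supplies no proof of its own, so there is nothing to diverge from: your route is exactly the standard verification one would write out. Both ingredients check out --- the partial alternating row sum $\sum_{j=0}^{k}(-1)^j\binom{d}{j}=(-1)^k\binom{d-1}{k}$ (equivalently your Pascal-rule telescoping, whose sole surviving term is indeed $(-1)^{(k+1)-k+1}\binom{d-1}{k}=\binom{d-1}{k}$), and the absorption identity $(d-j)\binom{d}{j}=d\binom{d-1}{j}$, which reduces the second sum to the first identity with parameter $d-1$; the hypothesis $k<d-1$ is precisely what makes that application legitimate. One minor reassurance on your flagged care-point: under the substitution $d\mapsto d-1$ the sign exponent $j-k+1$ does not shift at all (it involves neither $d$ nor a reindexing of $j$), so the only genuine bookkeeping is in the telescoping step, and your boundary-case checks ($k=d-1$ and $k=d-2$) confirm the signs are right.
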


\begin{theorem} Let $P$ be a $(d-1)$-dimensional Buchsbaum simplicial poset, $d>4$, satisfying condition (1) of Theorem \ref{Theorem:main} for some $k$, $\frac{d}{2} \leq k < d-1$. Then $P$ has a $(d-2)$-face which is contained in at least three $(d-1)$-faces.

\begin{proof} Suppose to the contrary the each $(d-2)$-face of $P$ is contained in at most two facets. Let $A$ and $B$ be the number of $(d-2)$-faces of $P$ contained in exactly one or two facets of $P$, respectively, so $A+B = f_{d-2}(P)$. Furthermore, as each facet of $P$ contains exactly $d$-many $(d-2)$-faces, we have $A + 2B = df_{d-1}(P)$. Solving we see that $A = 2f_{d-2} - df_{d-1}$, and in particular it follows that we must have $2f_{d-2} - df_{d-1} \geq 0$.

Now, from the definitions and property (1), we have $h_0(P) =1$, $h_i(P) = 0$ for $0 < i \leq k$, and $h_i(P) = (-1)^{i-k+1}\binom{d}{i}$ for $k < i \leq d$. Recall the well-known formula for recovering $f$-numbers from $h$-numbers,
\[
f_{i-1} = \sum_{j = 0}^i\binom{d-j}{d-i}h_j.
\]

In particular, it follows from the identities in Lemma \ref{lemma:binomial-coeffs} that
\begin{align*}
f_{d-1}(h) 
& = 1 + \binom{d-1}{k}
\end{align*}
and
\begin{align*}
f_{d-2}(h) 
& = d + d\binom{d-2}{k}.
\end{align*}

Thus we have
\begin{align*}
0 & \leq 2\left(d + d\binom{d-2}{k}\right) -  d\left(1 + \binom{d-1}{k}\right),\\
\end{align*}
so
\begin{align*}
0 & \leq  1 + 2\binom{d-2}{k} -  \binom{d-1}{k},\\
& = 1 + \binom{d-2}{k} - \binom{d-2}{k-1}\\
& = 1 + \frac{(d-1) - 2k}{d-1}\binom{d-1}{k}\\
& \leq 1 -\frac{1}{d-1}\binom{d-1}{k},\\
\end{align*}
with equality only if $k = \frac{d}{2}$. On the other hand, $\binom{d-1}{k} \geq d-1$ when $\frac{d}{2} \leq k < d-1$, with equality only when $k = d-2$. Combining these we have 
\[
0  \leq 1 -\frac{1}{d-1}\binom{d-1}{k} \leq 0,
\]
and so must have equality in both cases; thus $\frac{d}{2} = k = d-2$, and $d = 4$, contradicting our assumption.

\end{proof}

\end{theorem}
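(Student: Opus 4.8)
The plan is to argue by contradiction, turning the combinatorial hypothesis into a numerical inequality that the prescribed face vector violates. Assume every $(d-2)$-face of $P$ lies in at most two facets. I would double-count the (ridge, facet) incidences: letting $A$ and $B$ count the $(d-2)$-faces lying in exactly one, respectively exactly two, facets, purity of $P$ gives $A + B = f_{d-2}(P)$, while counting incidences from the facet side (each of the $f_{d-1}(P)$ facets is a $(d-1)$-simplex with exactly $d$ facets of its own) gives $A + 2B = d\,f_{d-1}(P)$. Eliminating $B$ yields $A = 2f_{d-2}(P) - d\,f_{d-1}(P)$, so from $A \ge 0$ we obtain the key inequality $2f_{d-2}(P) \ge d\,f_{d-1}(P)$. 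Everything then reduces to showing that this inequality fails under the stated hypotheses on $k$ and $d$.

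Next I would pin down the relevant $f$-numbers. Condition (1) of Theorem~\ref{Theorem:main} prescribes both the $h'$-vector and the Betti numbers of $P$; inverting the defining relation \eqref{hprime-numbers} (the same manipulation used, run in the other direction, in Lemma~\ref{lemma:sufficiencies}) recovers the $h$-vector: $h_0(P) = 1$, $h_i(P) = 0$ for $0 < i \le k$, and $h_i(P) = (-1)^{i-k+1}\binom{d}{i}$ for $k < i \le d$. Feeding this into the standard inversion $f_{i-1} = \sum_{j=0}^{i}\binom{d-j}{d-i}h_j$ and applying the two alternating binomial identities of Lemma~\ref{lemma:binomial-coeffs} gives $f_{d-1}(P) = 1 + \binom{d-1}{k}$ and $f_{d-2}(P) = d + d\binom{d-2}{k}$.

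Finally I would substitute these values back into $2f_{d-2}(P) - d\,f_{d-1}(P) \ge 0$. Dividing by $d$ and simplifying with Pascal's rule together with the identity $\binom{d-2}{k} - \binom{d-2}{k-1} = \frac{(d-1)-2k}{d-1}\binom{d-1}{k}$ turns the inequality into $0 \le 1 + \frac{(d-1)-2k}{d-1}\binom{d-1}{k}$. Since $k \ge \tfrac{d}{2}$ forces $(d-1)-2k \le -1$, this yields $\binom{d-1}{k} \le d-1$, with equality only when $k = \tfrac{d}{2}$; on the other hand, throughout the range $\tfrac{d}{2} \le k < d-1$ one has $\binom{d-1}{k} \ge d-1$ by unimodality of the binomial coefficients, with equality only when $k = d-2$. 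The two-sided squeeze $d-1 \le \binom{d-1}{k} \le d-1$ then forces $k = \tfrac{d}{2}$ and $k = d-2$ simultaneously, hence $d = 4$, contradicting the hypothesis $d > 4$.

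I do not expect a serious obstacle here: the two potentially fiddly steps — keeping the signs correct when recovering the $h$-vector from \eqref{hprime-numbers}, and evaluating the alternating binomial sums for $f_{d-1}(P)$ and $f_{d-2}(P)$ — are both routine, and the latter is exactly what Lemma~\ref{lemma:binomial-coeffs} is designed to handle. The only step requiring slight care is the closing squeeze, where one must track the equality cases of \emph{both} monotonicity bounds on $\binom{d-1}{k}$ within the interval $\tfrac{d}{2} \le k < d-1$ in order to conclude $d = 4$.
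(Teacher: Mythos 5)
Your proposal is correct and follows essentially the same route as the paper: the same double-count of (ridge, facet) incidences giving $2f_{d-2} \ge d\,f_{d-1}$, the same recovery of the $h$-vector from condition (1) and of $f_{d-1} = 1+\binom{d-1}{k}$, $f_{d-2} = d + d\binom{d-2}{k}$ via Lemma \ref{lemma:binomial-coeffs}, and the same two-sided squeeze on $\binom{d-1}{k}$ forcing $k=\tfrac{d}{2}=d-2$ and hence $d=4$.
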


In particular when $\frac{d}{2} \leq k < d-1$ it is impossible to build simplicial posets satisfying the condition (1) of Theorem \ref{Theorem:main} that are manifolds, so the bounds given by Novik and Swartz are not sharp for manifolds with boundary. This leads one to ask what the right characterization is in this case. At this time we have no good conjecture. 

It also may be possible that the posets $X(k,d)$ of Theorem \ref{Theorem:main} could be built so as to be manifolds for more pairs $k,d$ than are presented here. For example, in addition to the $k=1$ or $d-1$ cases, recall that the (different) construction of $X(2,5)$ in \cite{Novik-Swartz} is obtained by removing a single facet from a simplicial cell decomposition of $\mathbb{C}P^2$, and is thus a manifold with boundary. More progress on these minimal cases may yet shed light on the general problem of manifolds with boundary.

\section{Acknowledgements} 
The authors thank Isabella Novik for several helpful discussions and her many useful comments on an earlier draft of this paper, and Ed Swartz for his comments, which inspired the inclusion of section \ref{section:manifolds}.

\end{document}